\documentclass[11pt,a4paper,abstract=on]{scrartcl}

\usepackage[english]{babel}

\usepackage{lmodern}
\usepackage[T1]{fontenc}

\usepackage{ucs}
\usepackage[utf8x]{inputenc}


\usepackage{amsfonts,amstext,amsmath,amssymb,amsopn,amsthm}

\usepackage{mathtools}

\usepackage{dsfont}
\usepackage{mathrsfs}

\usepackage{listings}
\newcommand{\dd}{\mbox{d}}
\usepackage[dvipsnames, svgnames, x11names]{xcolor}

\usepackage{siunitx}
\usepackage{graphicx}
\usepackage{subfig}
\usepackage{wrapfig}

 \usepackage{todonotes}

\usepackage{csquotes}
\usepackage[colorlinks=true, 
linkcolor=blue,
pdfstartview=FitH,      
breaklinks=true,        
bookmarksopen=true,     
bookmarksnumbered=true  
]{hyperref}
\usepackage{enumerate}
\usepackage{caption}

\usepackage[top=1in, bottom=1.5in, left=1in, right=1in]{geometry}
\usepackage[linesnumbered,lined,ruled, noend]{algorithm2e}
\usepackage{harvard}


\newcommand{\paren}[1]{\left(#1\right)}
\newcommand{\ecklam}[1]{\left[#1\right]}

\renewcommand{\equiv}{\ensuremath{:=}}

\newcommand{\mymap}[3]{#1:\,#2 \to #3\,}

\newcommand{\mysetc}[2]{\left\{#1\,\middle|\,#2\right\}}
\newcommand{\set}[2]{\left\{#1\,\middle|\,#2\right\}}

\newcommand{\ip}[2]{\left\langle #1,\, #2\right\rangle}

\newcommand{\cpr}[2]{\ensuremath{\mathbb{P}\left(#1\,\middle|\,#2\right)}}

\newcommand{\norm}[1]{\left\|#1\right\|}

\newcommand{\Nbb}{\mathbb{N}}

\newcommand{\Rn}{\mathbb{R}^n}
\newcommand{\Pcal}{\mathcal{P}}
\newcommand{\Jcal}{\mathcal{J}}
\newcommand{\Fcal}{\mathcal{F}}

\newcommand{\xbar}{{\overline{x}}}

\newcommand{\tbar}{\overline{t}}

\DeclareMathOperator{\dist}{dist}

\DeclareMathOperator*{\argmin}{\arg\!\min}

\DeclareMathOperator{\prox}{prox}

\DeclareMathOperator{\Id}{Id}

\DeclareMathOperator{\Fix}{Fix}

\DeclareMathOperator{\gph}{gph}

\DeclareMathOperator{\inv}{inv}
\DeclareMathOperator{\indep}{\perp \!\!\! \perp\,}

\makeatletter
\def\@endtheorem{\endtrivlist\@endpefalse }
\makeatother

\newtheorem{thm}{Theorem}[section]
\newtheorem{cor}[thm]{Corollary}
\newtheorem{lemma}[thm]{Lemma}
\newtheorem{prop}[thm]{Proposition}
\theoremstyle{definition}
\newtheorem{example}[thm]{Example}
\newtheorem{definition}[thm]{Definition}
\newtheorem{assumption}[thm]{Assumption}

\newtheoremstyle{note}
{3pt}
{3pt}
{}
{}
{\bfseries}
{\bfseries :}
{.5em}
{}
\theoremstyle{note}

\newtheorem{rem}[thm]{Remark}

\title{Rates of Convergence for Chains of Expansive Markov Operators}
\author{Neal Hermer\thanks{Institute for Numerical and Applied Mathematics,
    University of Goettingen,
    37083 Goettingen, Germany. NH was supported by 
    Deutsche Forschungsgemeinschaft Research Training Grant 2088 TP-B5.
    E-mail:  \texttt{n.hermer@math.uni-goettingen.de}}, 
  D. Russell Luke\thanks{Institute for Numerical and Applied Mathematics,
    University of Goettingen,
    37083 Goettingen, Germany. DRL was supported in part by 
    Deutsche Forschungsgemeinschaft Research Training Grant 2088 TP-B5 
    and 
	Deutsche Forschungsgemeinschaft (DFG, German Research Foundation) - Project-ID 432680300 - SFB 1456.
    E-mail:  \texttt{r.luke@math.uni-goettingen.de}}  
  and  Anja Sturm\thanks{Institute for Mathematical Stochastic,
    University of Goettingen,
    37077 Goettingen, Germany. AS was supported in part by Deutsche 
Forschungsgemeinschaft 
    Research Training Grant 2088 TP-B5 and 
	Deutsche Forschungsgemeinschaft (DFG, German Research Foundation) - Project-ID 432680300 - SFB 1456.
    E-mail:  \texttt{asturm@math.uni-goettingen.de}}}
\date{\today}

\begin{document}
 \maketitle

 \begin{abstract}
We provide conditions that guarantee local rates of convergence in distribution of iterated random functions
   that are  {\em not nonexpansive} mappings in locally compact Hadamard spaces.  
   Our results are applied to stochastic instances of common algorithms in optimization, 
   stochastic tomography for X-FEL imaging, and a stochastic algorithm for the computation 
   of Fr\'echet means in model spaces for phylogenetic trees. 
\end{abstract}

{\small \noindent {\bfseries 2010 Mathematics Subject Classification:}
  Primary 60J05, 
  46N10, 
  46N30, 
  65C40, 
  49J55 
    Secondary  49J53,   
    65K05.\\ 
}

\noindent {\bfseries Keywords:}
Averaged mapping, nonexpansive mapping, stochastic feasibility,
inconsistent stochastic fixed point problem, iterated random
functions, convergence, Markov chain, Markov operator

\section{Introduction}
\label{sec:introduction}
This work concerns abstract mathematical structures that guarantee convergence in distribution of 
Markov chains when the transition kernel is generated by mappings that are not {\em nonexpansive}.
Our motivation for the abstract study pursued here is 
the very concrete application of X-ray free electron laser (X-FEL) imaging experiments \cite{Chap11,Bou12,ArdGru}.
Our framework provides conditions under which algorithms for the reconstruction of {\em electronic densities} or 
{\em orbitals} of a molecule from X-FEL 
measurements are guaranteed to be successful.
Another motivating application comes from already established stochastic algorithms for computing 
the Fr\'echet means of phylogenetic trees in nonlinear model spaces.  For this problem our analysis
immediately yields improvements to the state of the art for these kinds of algorithms.  
While our focus is mainly mathematical, we briefly introduce the X-FEL imaging problem to set the scene. 

The goal of X-FEL imaging is to determine the {\em electron density} of a molecule from 
experimental samples of its scattering probability distribution (i.e. diffraction pattern). Without going into the 
intricate aspects of the measurement device or scattering theory, we imagine a three-dimensional
measurement device  that counts the occurrence of photons in a three-dimensional domain partitioned 
into voxels of a fixed size far away from (that is, in the {\em far field} of) a molecule  
that has been illuminated by a short X-FEL pulse.  The 
molecule under observation is at a random orientation relative to the 
measurement device.  The illuminating pulse is only long enough to cause a few (between 3 and 100) 
scattering events from the interaction of a molecule's electrons with the X-ray.  The 
experiment is repeated about $10^9$ times, each time with the molecule at a different random orientation. 

The physical 
model mapping the probability of a scattering event at a particular location on the molecule to the 
resulting electromagnetic field at large distances, is, to first order, simply the Fourier transform
of the {\em electronic density} of the molecule.  An X-FEL measurement is truly a random sample of the 
electromagnetic field far away from the molecule.  
The measurement device, however, can only measure the occurrence of photons, not their direction as 
indicated by the real and complex parts of the electromagnetic field.  If both the probability of observing 
a photon at a certain location, and its direction -- i.e. its phase -- were known, then the electronic 
density of the molecule that 
produced those photons could be reconstructed from the observation by simply inverting the Fourier transform.   
When the probabilities of observing 
photons at each location in the far field of the molecule can be measured (as is the case in 
conventional coherent diffraction imaging \cite{RobSal20} and wavefront sensing \cite{Luke02a}), 
then the problem becomes 
the well-known {\em phase retrieval problem}  \cite{Rayleigh92} for 
which the mathematical analysis has been developed 
in \cite{Luke02a}, \cite{BurkeLuke03}, \cite{LukNguTam18}, and \cite{LukMar20}.

The algorithms that we envision for performing such reconstructions from X-FEL data proceed by 
constructing a mapping from the current estimate for the electron density of the molecule 
to another electron density that is, in some sense, closer to being consistent with new 
X-FEL observations;  in other words, the mapping is itself stochastic. Examples of such mappings
are gradient flows of the parameterization of the the density in a direction that either maximizes
the probability of the new observation or minimizes some energy;  alternatively, one could envision 
{\em projections} of the current estimate onto sets of ``most likely'' densities that explain the 
new observations.  We view this type of algorithmic procedure as a random function iteration generating 
a Markov chain whose {\em transition kernel} maps the current estimate to a new estimate based on new randomly sampled data.  

In \cite{HerLukStu22a} it is shown that iterations of randomly selected 
{\em $\alpha$-firmly nonexpansive mappings} (defined precisely below) in a Euclidean space converge in the 
Prokhorov-L\'evy metric to an invariant probability measure of the corresponding Markov operator.  
Mappings that are $\alpha$-firmly nonexpansive are also {\em nonexpansive}; 
the analysis of Markov chains for the case of nonexpansive 
transition kernels in queuing theory can be found in \cite{BaccBrem03} and \cite{BaccMair98}.  
The transition kernels that we propose for the X-FEL application, however, are not nonexpansive. 
Our analysis extends the more recent study 
of nonexpansive transition kernels in \cite{HerLukStu22a} to the expansive case
using results from the deterministic theory of expansive fixed point mappings developed in \cite{LukNguTam18}.
We return to the specific application of X-FEL imaging in Section \ref{sec:incFeas}.  

Random function iterations (RFI) \cite{Diaconis1999} generalize
deterministic fixed point iterations, and are a useful framework for
studying a number of important applications.  An RFI is a stochastic
process of the form $ X_{k+1} := T_{\xi_{k}} X_{k} $ ($k=0,1,2,\dots$)
initialized by a random variable $X_0$ with distribution $\mu_0$ and
values on some set $G$.  This of course includes initialization from
a deterministic point $x_0\in G$ via the $\delta$-distribution.  
Here $\xi_{k}$ ($k=0,1,2,\dots$) is an
element of a sequence of i.i.d.\ random variables that map from a
probability space into a measurable space of indices $I$ (not
necessarily countable) and $T_{i}$ $(i \in I)$ are self-mappings on
$G$.  The iterates $X_{k}$ form a (time-homogeneous) Markov chain that takes values in
the space $G$, which is, for our purposes, a \emph{Polish space}.  
Deterministic fixed point iterations are included when the index set
$I$ is just a singleton.

Notation, basic facts, and the main results are presented in Section \ref{sec:consistent_feas_prob}.
The assumptions on the mappings generating the Markov operators have been shown to be 
necessary and sufficient for local rates of convergence in 
the analysis of deterministic algorithms for continuous optimization \cite[Theorem 2]{LukTebTha18}
and \cite[Theorem 16]{LauLuk21}, and also necessary for rates for the case of 
consistent stochastic feasibility \cite[Theorem 3.15]{HerLukStu19a} in a Euclidean setting. 
Whether these assumptions are also necessary in the setting considered here is not known, though we
conjecture that this is true.  
In Section \ref{sec:main statements}  we present the statement of the main result,   
Theorem \ref{t:msr convergence} 
which provides sufficient conditions for a quantification of convergence of the RFI. 
The proof of the main result is delayed until Section \ref{sec:mainres} so that the  
regularity assumptions can be developed;  namely, that the transition kernel is {\em almost}
$\alpha$-firmly nonexpansive {\em in expectation} (Definition \ref{d:afne oa}) 
and the {\em discrepancy} between a given measure and the set of invariant 
measures of the Markov operator, \eqref{eq:Psi},  
is {\em metrically subregular} (Definition \ref{d:(str)metric (sub)reg}). 
We conclude this study with Section \ref{sec:incFeas} where 
we focus on applications to nonconvex optimization on measure spaces and 
inconsistent feasibility.  

\section{RFI and the Stochastic Fixed Point Problem}
\label{sec:consistent_feas_prob}

This section follows the development of \cite{HerLukStu22a}.  
Our notation is standard.  The natural numbers are denoted by $\mathbb{N}$.  
For $G$, an abstract topological space,  
$\mathcal{B}(G)$ denotes the Borel $\sigma$-algebra and 
$(G,\mathcal{B}(G))$ is the 
corresponding measure space.  We denote by $\mathscr{P}(G)$ the set of all
probability measures on $G$.  The notation $X \sim \mu\in \mathscr{P}(G)$ means that the law
 of $X$, denoted $\mathcal{L}(X)$, satisfies 
 $\mathcal{L}(X)\equiv\mathbb{P}(X \in \cdot) = \mu$, 
 where $\mathbb{P}$ is the probability
 measure on some underlying probability space.  All of these different ways of 
 indicating a measure $\mu$ will be used.  We take for granted a familiarity
 with probability theory and Markov chains.  For general background  and fundamental results, in particular regarding the convergence of 
 Markov chains, 
see for instance \cite{kallenberg1997},  \cite{Billingsley},  \cite{Hairer2006},
 \cite{Hairer2021},   \cite{HerLerLas},  \cite{MeyTwe},  \cite{LevinPeresWilmer2017},  
 \cite{Villani2008}, and \cite{Walters82}. 
%
We emphasize that the results of the standard probability texts on Markov chain convergence 
 are not directly applicable to our particular setting, which includes the special conditions that are placed on the operators $T_i$.

Throughout, the pair $(G,d)$ denotes a separable metric space with metric
$d$.  Results concerning existence
of invariant measures and convergence of Markov chains require {\em completeness} of the 
metric space, that is that the space is {\em Polish}.  In defining the regularity of 
the building blocks, however, completeness is 
not a part of the characterization. For our main result, Theorem \ref{t:msr convergence}, we restrict the setting 
to self-mappings of a compact subset of a Hadamard space, a separable complete uniformly 
convex metric space with nonpositive curvature.  
Our application examples (Section \ref{sec:incFeas}) are mostly in Euclidean spaces, which are Hadamard 
spaces with zero curvature, but the example of computing mean phylogenetic trees (Section \ref{sec:tree})  
requires the generality of Hadamard spaces.  The purpose of starting 
with abstract separable metric spaces is to make it easier to see which results rely in part  
on the regularity of the space in addition to those properties we require of the mappings.  
Markov operators generated from mappings on Hadamard spaces represent the current limits of 
the direct applicability of the theory presented here.  

The distance of a point $x\in G$ to a set 
$A\subset G$ is denoted by $d(x,A)\equiv \inf_{w\in A}d(x,w)$. 
Continuing with the development initiated in the introduction, we will consider 
a collection of mappings $\mymap{T_{i}}{G}{G}$, $i \in I$, on $(G,d)$, 
where $I$ is an arbitrary index set - not necessarily countable.    
The measure space of indexes is denoted by
$(I,\mathcal{I})$, and $\xi$ is an $I$-valued random variable on a
probability space. 
The pairwise independence of two random variables $\xi$ and $\eta$ is
denoted $\xi\indep\eta$.  The random variables $\xi_k$ in the sequence
$(\xi_{k})_{k\in\mathbb{N}}$ (abbreviated $(\xi_{k})$) 
are independent and identically
distributed (i.i.d.) \ with $\xi_{k}$ distributed as
$\xi$ ($\xi_k\sim \xi$).  
The method of random
function iterations is formally presented in Algorithm \ref{algo:RFI}.
\begin{algorithm}    
\SetKwInOut{Output}{Initialization}
  \Output{Set $X_{0} \sim \mu_0 \in \mathscr{P}(G)$, $X_{0} \indep (\xi_{k})$ with  $\xi_k\sim\xi$ i.i.d.}
    \For{$k=0,1,2,\ldots$}{
            {$ X_{k+1} = T_{\xi_{k}} X_{k}$}\\
    }
  \caption{Random Function Iterations (RFI)}\label{algo:RFI}
\end{algorithm}

\noindent We will use the notation
\begin{equation}\label{eq:X_RFI}
  X_{k}^{X_0} := T_{\xi_{k-1}} \ldots T_{\xi_{0}} X_{0}
\end{equation}
to denote the sequence of the RFI initialized with $X_0\sim \mu_0$.
When characterizing sequences initialized
with the delta distribution of a point $x$ we use the notation $X_{k}^{x}$.
The following assumptions will be employed
throughout.

\begin{assumption}\label{ass:1}
  \begin{enumerate}[(a)]
  \item\label{item:ass1:indep} $\xi_{0},\xi_{1}, \ldots, \xi_{k}$
    are i.i.d with values on $I$ and $\xi_k\sim \xi$.  $X_0$ is a 
    random variable with values on $G$, independent from $\xi_k$. 
  \item\label{item:ass1:Phi} The mapping $\mymap{\Phi}{G\times
      I}{G}$, $(x,i)\mapsto T_{i}x$ is measurable.
  \end{enumerate}
\end{assumption}

We define the Markov chains corresponding to the random function iteration in terms of its 
{\em transition kernel}:  $\mymap{p}{G\times \mathcal{B}(G)}{[0,1]}$ where  
$p(\cdot,A)$ is measurable for all $A \in
\mathcal{B}(G)$  and $p(x,\cdot)$ is a probability measure for all $x \in G$.
  A sequence of random variables $(X_{k})$,
  $\mymap{X_{k}}{(\Omega,\mathcal{F},\mathbb{P})}{(G,\mathcal{B}(G))}$
  is called a Markov chain with transition kernel $p$ if for all $k \in
  \mathbb{N}$ and $A \in \mathcal{B}(G)$
  $\mathbb{P}$-a.s.\ the following hold:
  \begin{enumerate}[(i)]
  \item $\cpr{X_{k+1} \in A}{X_{0}, X_{1}, \ldots, X_{k}} =
    \cpr{X_{k+1} \in A}{X_{k}}$;
  \item $\cpr{X_{k+1} \in A}{X_{k}} = p(X_{k},A)$.
  \end{enumerate}

  Under Assumption \ref{ass:1},
  the sequence of random variables $(X_{k})$
  generated by Algorithm \ref{algo:RFI} is a Markov chain with transition
  kernel $p$ given by 
\begin{equation}\label{eq:trans kernel}
  (x\in G) (A\in
  \mathcal{B}(G)) \qquad p(x,A) \equiv \mathbb{P}(\Phi(x,\xi) \in A) =
  \mathbb{P}(T_{\xi}x \in A)
\end{equation}
for the measurable \emph{update function} 
$\mymap{\Phi}{G\times  I}{G}$, $(x,i)\mapsto T_{i}x$ \cite[Proposition 2.3]{HerLukStu22a}.

The Markov operator $\mathcal{P}$ is defined pointwise for a measurable 
function 
$\mymap{f}{G}{\mathbb{R}}$ via
\begin{align*}
  (x\in G)\qquad \mathcal{P}f(x):= \int_{G} f(y) p(x,\dd{y}),
\end{align*}
when the integral exists. 

The Markov operator $\mathcal{P}$ 
is \emph{Feller} if $\mathcal{P}f \in C_{b}(G)$ whenever $f \in
C_{b}(G)$, where $C_{b}(G)$ is the set of bounded and continuous
functions from $G$ to $\mathbb{R}$. 
This property is central to the theory of existence of invariant measures. 
An application of Lebesgue's dominated convergence theorem immediately yields that
if $T_{i}$ is continuous for all $i\in I$
then the Markov operator $\mathcal{P}$ is Feller, see also  \cite[Theorem 4.22]{Hairer2006}. 

We denote the
dual Markov operator $\mymap{\mathcal{P}^{*}}{\mathscr{P}(G)}{\mathscr{P}(G)}$ 
acting on a measure $\mu$ by action on 
the right by $\mathcal{P}$ via
\begin{align*}
  (A \in \mathcal{B}(G))\qquad (\mathcal{P}^{*}\mu) (A):=
  (\mu\mathcal{P}) (A) := \int_{G} p(x,A) \mu(\dd{x}).
\end{align*}
We can thus write the distribution of the $k$-th 
iterate of the Markov chain generated
by Algorithm \ref{algo:RFI} as $\mathcal{L}(X_{k}) =
\mu_0 \mathcal{P}^{k}$.

\subsection{The Stochastic Fixed Point Problem}\label{sec:consist RFI}
The {\em stochastic feasibility}
problem is:
\begin{align}
  \label{eq:stoch_feas_probl}
\mbox{ Find }  x^{*} \in C := \mysetc{x \in G}{\mathbb{P}(x = T_{\xi}x) = 
1}.
\end{align}
This was first studied in \cite{ButnariuFlam95} and  \cite{Butnariu95} in the context of 
{\em stochastic convex set feasibility} and has been extended more generally to 
finding the (almost sure) fixed points of multi-valued mappings in 
\cite{HerLukStu19a} and   \cite{HerLukStu22a}.
A point $x$ such that $x = T_{i}x$ is a {\em fixed point} of the operator $T_{i}$;
the set of all such points is denoted by 
\begin{align*}
  \Fix T_{i} = \mysetc{x \in G}{x = T_{i}x}.
\end{align*}
In \cite{HerLukStu19a} it was assumed that $C\neq\emptyset$.  If
$C=\emptyset$, following Butnariu \cite{Butnariu95} this is called the {\em inconsistent stochastic
  feasibility}.  
  
One need only consider linear systems of equations, $Ax=b$, to demonstrate an inconsistent feasibility
problem.  Each linear equation, $\ip{a_j}{x}=b_j$ represents a hyperplane;  the solution to $Ax=b$
is the intersection of the hyperplanes.  Obviously, when the system is overdetermined, or 
underdetermined with noise, the intersection is likely empty - an inconsistent feasibility
problem.  Most readers will reflexively reformulate the problem as a least squares minimization 
problem, but there are compelling reasons why this is not always the best way around the dilemma 
of the inconsistent feasibility problem.  

Avoiding debate about least-squares versus feasibility, 
what all can surely agree upon is that the inconsistency of the problem formulation is an artifact 
of asking the wrong question.  A fixed point of the dual Markov operator $\mathcal{P}$
is called an \emph{invariant} measure and satisfies $\pi \mathcal{P} = \pi$.  
The set of all invariant probability measures is denoted by 
$\inv \mathcal{P}$.
The solution to the dilemma of the inconsistent feasibility problem is instead to 
solve the following {\em stochastic fixed point problem}:
\begin{align}
  \label{eq:stoch_fix_probl}
  \mbox{Find}\qquad \pi\in\inv\mathcal{P}.
\end{align}

\begin{example}[inconsistent stochastic feasibility]\label{example:inconsist}
  Consider the (trivially convex, nonempty and closed) sets
  $C_{-1}\equiv \{-1\}$ and $C_1\equiv \{1\}$ together with a random
  variable $\xi$ such that $\mathbb{P}(\xi=1)=\mathbb{P}(\xi=-1)=1/2$.
  The mappings $T_i x=P_{C_i}x=i$ for $x \in \mathbb{R}$ and $i\in
  I=\{-1,1\}$ are the projections onto the sets $C_{-1}$ and $C_{1}$.
  The RFI iteration then amounts to just random jumps between the
  values $-1$ and $1$. So it holds that $\mathbb{P}(T_{\xi}x = i) =
  1/2$ for all $x \in \mathbb{R}$ and hence there is clearly no
  feasible fixed point to this iteration, that is, the set
  $C$ defined in \eqref{eq:stoch_feas_probl} is empty. Nevertheless, by {\em disintegration} we have
  \begin{align*}
    \mathbb{P}(X_{k+1} = i) = \mathbb{E}[\cpr{T_{\xi_{k}}X_{k} =
      i}{X_{k}}] = \frac{1}{2}
  \end{align*}
  for all $k \in \mathbb{N}$. That means the unique invariant
  distribution to which the distributions of the iterates of the RFI
  (i.e.\ $(\mathbb{P}(X_{k} \in \cdot))_{k}$) converges is
  $\pi=\tfrac{1}{2}(\delta_{-1}+\delta_{1})$, and this is attained
  after one iteration.  The least squares solution is the mean of this
  invariant distribution.
\end{example}

\subsection{Modes of convergence}
\label{sec:modesofcvg}
Algorithm \ref{algo:RFI} generates a sequence of random variables whose distributions, 
when conditions allow, converge to solutions to \eqref{eq:stoch_fix_probl}.
For this we focus on {\em convergence in distribution}.
Let $(\mu_{k})$ be a sequence of probability measures on $G$.  The sequence 
$(\mu_{k})$ is said to converge in distribution to $\mu$ whenever $\mu \in
\mathscr{P}(G)$ and for all $f \in C_{b}(G)$ it holds that $\mu_{k} f
\to \mu f$ as $k \to \infty$, where $\mu f := \int f(x) \mu(\dd{x})$. 
Equivalently a sequence of random variables $(X_{k})$ is said to converge in 
distribution if their laws $(\mathcal{L}(X_{k}))$ do.

We consider convergence in distribution for the corresponding sequence of measures
  $(\mathcal{L}(X_{k}))$ to a probability measure $\pi \in \mathscr{P}(G)$, i.e.\
  for any $f \in C_{b}(G)$
  \begin{align*}
    \mathcal{L}(X_{k})f = \mathbb{E}[f(X_{k})] \to \pi f, 
    \qquad \text{as } k \to \infty.
  \end{align*}
An elementary fact from the theory of Markov chains
is that, if the Markov operator 
$\mathcal{P}$ is Feller and $\pi$ is a cluster
point of the sequence of measures $(\mu_k) \equiv (\mu_{0}P^k)$ with respect to 
convergence in distribution then $\pi$ is an invariant probability measure \cite[Theorem 1.10]{Hairer2021}.  
We will assume existence of invariant measures.  For more on this theory readers are referred to 
\cite{Hairer2006}, \cite{Hairer2021} and  \cite{Billingsley}.

Quantifying convergence is essential for establishing estimates for the distance 
of the iterates to the limit point, when this exists.  
A sequence $(x_k)$ 
in a metric space $(G,d)$
 is said to \emph{converge R-linearly} to $\tilde{x}$ with rate $c\in [0,1)$
when
\begin{align}\label{eq:R-lin}
\exists \beta>0:\qquad d(x_k,\tilde{x}) \le \beta c^k\quad \forall k\in \Nbb.
\end{align}
The sequence $(x_k)$ 
is said to converge  \emph{Q-linearly} to 
$\tilde{x}$ with rate $c\in [0,1)$ if
\begin{align}\label{eq:Q-lin}
\exists c\in[0,1):\qquad d(x_{k+1},\tilde{x}) \le c d(x_{k},\tilde{x})\quad \forall k\in \Nbb.
\end{align}
By definition, Q-linear convergence implies R-linear convergence with the same rate; 
the converse implication does not hold in general.  
Q-linear convergence is 
encountered with contractive fixed point mappings, and this leads to 
a priori and a posteriori error estimates on the sequence.  This type of convergence is 
referred to as {\em geometric} or {\em exponential} convergence
in different communities.  The crucial distinction between R-linear and Q-linear  
convergence is that R-linear convergence  
permits neither a priori nor a posteriori error estimates.  
For more on these notions see \cite[Chapter 9]{OrtegaRheinboldt70}.

A common metric for spaces of measures is the {\em Wasserstein metric}.  
For $p \geq 1$ let 
  \begin{equation}\label{eq:p-probabiliy measures}
       \mathscr{P}_{p}(G) = \mysetc{\mu \in \mathscr{P}(G)}{ \exists\, x
      \in G \,:\, \int d^{p}(x,y) \mu(\dd{y}) < \infty}.
  \end{equation}
  The Wasserstein $p$-metric on $\mathscr{P}_{p}(G)$, denoted $W_p$, is defined by 
 \begin{equation}\label{eq:Wasserstein}
  W_p(\mu, \nu)\equiv \paren{\inf_{\gamma\in C(\mu, \nu)}\int_{G\times G} 
d^p(x,y)\gamma(dx, dy)}^{1/p}\quad (p\geq 1)
 \end{equation}
 where $C(\mu, \nu)$ is the set of {\em couplings} of $\mu$ and $\nu$:
   \begin{align}
    \label{eq:couplingsDef}
    C(\mu,\nu) := \mysetc{\gamma \in \mathscr{P}(G\times G)}{ \gamma(A
      \times G) = \mu(A), \, \gamma(G\times A) = \nu(A) \quad \forall A
      \in \mathcal{B}(G)}.
  \end{align}
  We will also refer in Proposition \ref{t:sfb} to the {\em Prokhorov-L\'evy 
  distance}, $d_P$, 
  metrizing weak convergence in distribution:
  \begin{equation}\label{eq:PL}
    d_{P}(\mu,\nu) = \inf\mysetc{\epsilon >0}{\mu(A) \le
      \nu(\mathbb{B}(A,\epsilon)) + \epsilon,\, \nu(A)\le
      \mu(\mathbb{B}(A,\epsilon))+\epsilon \quad \forall A \in
      \mathcal{B}(G)}.
  \end{equation} 

 To conclude this section, we state without proof 
 the elementary fact that, for the setting considered 
here, the set of invariant measures is closed. For proof see for example \cite[Section 5]{Hairer2006}.
\begin{lemma}\label{lemma:invMeasuresClosed}
 Let $G$ be a Polish space and let $\mathcal{P}$ be a Feller
  Markov operator, which is in particular the case under Assumption \ref{ass:1}, 
if $T_{i}$ is continuous for all $i\in
  I$. Then the set of associated invariant measures $\inv\mathcal{P}$ is 
closed with respect to the topology of convergence in distribution.
\end{lemma}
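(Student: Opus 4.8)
The plan is to verify closedness sequentially, which is legitimate because $G$ is Polish and hence $\mathscr{P}(G)$, equipped with the topology of convergence in distribution, is itself metrizable (for instance by the L\'evy--Prokhorov metric). It therefore suffices to take an arbitrary sequence $(\pi_n)$ in $\inv\mathcal{P}$ converging in distribution to some $\pi\in\mathscr{P}(G)$ and to show that $\pi$ is again invariant, i.e.\ $\pi\mathcal{P}=\pi$.

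To establish $\pi\mathcal{P}=\pi$ I would test against bounded continuous functions: two probability measures on $G$ agree if and only if they integrate every $f\in C_b(G)$ to the same value, so it is enough to prove that $(\pi\mathcal{P})f=\pi f$ for every $f\in C_b(G)$. Unravelling the definition of the dual operator (a Fubini-type computation using the defining property of the transition kernel) gives $(\pi\mathcal{P})f=\pi(\mathcal{P}f)$, where $\mathcal{P}f(x)=\int_G f(y)\,p(x,\dd{y})$. The key structural ingredient is the Feller property: since $\mathcal{P}$ is Feller, $\mathcal{P}f$ lies in $C_b(G)$ whenever $f$ does, so $\mathcal{P}f$ is itself a legitimate test function for convergence in distribution.

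With this in hand the argument is a two-fold passage to the limit. Invariance of each $\pi_n$ gives $\pi_n(\mathcal{P}f)=\pi_n f$ for all $n$. Applying the definition of convergence in distribution once to the test function $f\in C_b(G)$ and once to the test function $\mathcal{P}f\in C_b(G)$ yields $\pi_n f\to\pi f$ and $\pi_n(\mathcal{P}f)\to\pi(\mathcal{P}f)$. Letting $n\to\infty$ in the identity $\pi_n(\mathcal{P}f)=\pi_n f$ then forces $\pi(\mathcal{P}f)=\pi f$. Since $f\in C_b(G)$ was arbitrary, this shows $\pi\mathcal{P}=\pi$, that is $\pi\in\inv\mathcal{P}$, completing the proof.

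The only place where genuine care is needed --- and the single step on which the whole statement hinges --- is the use of the Feller property to guarantee that $\mathcal{P}f$ remains bounded and continuous. Without it, $\mathcal{P}f$ need not be continuous and one could not pass to the limit on the left-hand side; everything else (metrizability of the weak topology, which permits the sequential reduction, and the elementary double limit) is routine. Note that by the remark following Assumption \ref{ass:1}, continuity of each $T_i$ already secures the Feller property, so the hypotheses are met in the stated setting.
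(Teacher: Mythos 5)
Your argument is correct and follows essentially the same route as the paper: take a weakly convergent sequence of invariant measures, use the Feller property to make $\mathcal{P}f$ an admissible test function, and pass to the limit in $\pi_n(\mathcal{P}f)=\pi_n f$. The only (welcome) addition is your explicit remark that metrizability of the weak topology on $\mathscr{P}(G)$ for Polish $G$ justifies the reduction to sequences, a point the paper leaves implicit.
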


\subsection{Regularity}\label{s:regularity}
The regularity of $T_{i}$ depends on the application.  In \cite{HerLukStu22a}
the regularity of $T_{i}$ is used to obtain generic convergence results for the 
corresponding Markov operator, but the regularity of the Markov operator is never 
explicitly defined.  We make this explicit here by lifting the regularity of $T_i$  
to corresponding notions of regularity of Markov operators in 
probability spaces in the case that $T_{i}$ is {\em expansive} following 
the development of such mappings in \cite{LukNguTam18} in a Euclidean
space setting.  When working with expansive mappings, multi-valued mappings appear naturally.  
In the stochastic setting, to ease the notation and avoid certain technicalities,  
we will consider only single-valued mappings $T_i$
that are only almost $\alpha$-firmly nonexpansive {\em in expectation}.
Recent studies define the regularity of fixed point mappings
in {\em $p$-uniformly convex spaces ($p\in(1,\infty)$) with parameter $c>0$} 
(see \cite{BLL} and  \cite{LauLuk21}).  
These are uniquely geodesic metric spaces $(G, d)$ 
for which  the following inequality holds \cite{NaoSil11}:
\begin{equation}\label{e:p-ucvx}
(\forall t\in [0,1])(\forall x,y,z\in G) \quad
d(z, (1-t)x\oplus ty)^p\leq (1-t)d(z,x)^p+td(z,y)^p - \tfrac{c}{2}t(1-t)d(x,y)^p,
\end{equation}
where $w=(1-t)x\oplus ty$ for $t\in (0,1)$ denotes the point $w$ on the geodesic 
connecting $x$ and $y$ such that $d(w,x)=td(x,y)$.
The definitions below hold formally in this nonlinear setting.  The only object whose 
properties are strongly tied to the geometry of the space is the {\em transport discrepancy} 
$\psi$ defined below in \eqref{eq:delta}.  We are limited
so far to locally compact Hadamard spaces, which are locally compact complete CAT($0$) spaces 
(Alexandrov \cite{Alexandrov} and Gromov \cite{Gromov}). 
More generally, a CAT($\kappa$) space is a geodesic metric space with sufficiently small 
triangles possessing comparison triangles with sides the same length as the geodesic 
triangle but for which the distance between points on the geodesic triangle are less than 
or equal to the distance between corresponding points on the comparison triangle.  
CAT($\kappa$) spaces are separable, but not complete, and locally $2$-uniformly convex 
with parameter $c\leq 2$.  A CAT($0$) space has $p=c=2$ in \eqref{e:p-ucvx}.  

The definition below conforms with the same objects defined in \cite{BLL} and  \cite{LauLuk21}.
\begin{definition}[pointwise almost ($\alpha$-firmly) nonexpansive mappings in CAT($0$) spaces]
\label{d:a-fne} 
Let $(G,d)$ be a CAT($0$) metric space and $D\subset G$ and let 
$\mymap{F}{D}{G}$.
  \begin{enumerate}[(i)]
  \item The mapping $F$  is said to be \emph{pointwise  
  almost nonexpansive at $x_0\in D$ on $D$}, abbreviated {\em pointwise ane},
whenever  
\begin{equation}\label{eq:pane}
\exists \epsilon\in[0,1):\quad  d(Fx, Fx_0) \le \sqrt{1+\epsilon}\,d(x,x_0), 
\qquad \forall x \in D.
\end{equation}
The {\em violation} is a value of $\epsilon$ for which \eqref{eq:pane} holds.
    When the above inequality holds for all $x_0\in D$ then $F$ is said to be 
    {\em almost nonexpansive on $D$} ({\em ane}).  When $\epsilon=0$ the 
    mapping $F$ is said to be 
    {\em (pointwise) nonexpansive}.  
  \item The mapping $F$ is said to be {\em pointwise almost $\alpha$-firmly 
nonexpansive  at $x_0\in D$ on $D$}, abbreviated {\em pointwise a$\alpha$-fne} 
  whenever
\begin{eqnarray}
&&
	\exists \epsilon\in[0,1)\mbox{ and }\alpha\in (0,1):\nonumber\\
	  \label{eq:paafne}&&	\quad	d^2(Fx,Fx_0) \le (1+\epsilon) d^2(x,x_0) - 
      \tfrac{1-\alpha}{\alpha}\psi(x,x_0, Fx, Fx_0)\qquad  x \in D,
\end{eqnarray}
where the {\em transport discrepancy} $\psi$ of  $F$ at $x, x_0$, $Fx$ and $Fx_0$
is defined by  
\begin{eqnarray}
&&\!\!\!\!\!\!\!\!\psi(x,x_0, Fx, Fx_0)\equiv \nonumber\\
\label{eq:delta}
&&\!\!\!\!\!\!\!\! d^2(Fx, x)+d^2(Fx_0, x_0) + d^2(Fx, Fx_0) + 
d^2(x, x_0)  - d^2(Fx, x_0)   - d^2(x,Fx_0).
\end{eqnarray}
When the above inequality holds for all $x_0\in D$ then $F$ is said to be 
{\em almost $\alpha$-firmly nonexpansive on $D$}, ({\em a$\alpha$-fne}). 
The {\em violation} is the constant  
$\epsilon$ for which \eqref{eq:paafne} holds.
When $\epsilon=0$ the mapping $F$ is said to be 
    {\em (pointwise) $\alpha$-firmly nonexpansive}, abbreviated {\em (pointwise) $\alpha$-fne}.  
  \end{enumerate}
\end{definition}

Nonexpansive and $\alpha$-firmly nonexpansive mappings have been studied for decades under 
various names and in various settings: a small selection of important works for our results includes
\cite{mann1953mean},  \cite{krasnoselski1955},  \cite{edelstein1966},  \cite{Browder67},  \cite{Gubin67},
 \cite{Bruck73},  \cite{BruckReich77},  \cite{BaiBruRei78}, 
 \cite{GoeRei84},  \cite{Baillon1996},  \cite{AriLeuLop14},  and \cite{RuiLopNic15}.  
The violation $\epsilon$ in \eqref{eq:pane} and \eqref{eq:paafne} is a 
recently introduced feature in the 
analysis of fixed point mappings, first appearing in this form in \cite{LukNguTam18}.  
Many  are familiar with mappings for which \eqref{eq:pane}  holds with $\epsilon<0$ at all $x_0\in G$ , 
i.e. contraction mappings.  In this case, 
the whole technology of pointwise a$\alpha$-fne mappings is not required since an 
appropriate application of Banach's fixed point theorem delivers existence of fixed points and convergence of 
fixed point iterations at a linear rate.  
We will have more to say about this later;  for the moment it suffices to note that the mappings associated 
with one of our target applications are expansive on all neighborhoods of fixed points and we will therefore require 
another property to guarantee convergence.    

Our definition with $\alpha=1/2$ and $\epsilon=0$ is equivalent to the 
definition of {\em firmly contractive} mappings given in \cite[Definition 6]{Browder67}.
In linear spaces the mappings with $\epsilon=0$ are called 
``averaged'' \cite{BaiBruRei78}. 
Ariza-Ruiz, Leu\c{s}tean and L\'opez-Acedo \cite{AriLeuLop14}
defined $\lambda$-firmly nonexpansive operators on subsets $D$ of 
W-hyperbolic spaces, as those operators satisfying 
\begin{equation}\label{e:RLN fne}
\exists \lambda\in(0,1):\quad  d(Fx, Fy)\leq d((1-\lambda)x\oplus \lambda Fx, (1-\lambda)y\oplus \lambda Fy) 
 \quad \forall x,y\in D.
\end{equation}
Another notion of regularity in the context of Hadamard spaces that 
is equivalent to \eqref{e:RLN fne} for an operator $F:H\to H$ and $x,y\in H$ 
uses 
\begin{equation}\label{eq:phi}
 \phi_F(t):=d((1-t)x\oplus tFx,(1-t)y\oplus tFy),\quad \mbox{for }t\in[0,1].
\end{equation}  
In \cite[Chapter 24]{GoeRei84} an operator $F:H\to H$ is called firmly nonexpansive 
whenever $\phi_F$ is nonincreasing on $[0,1]$ (see also \cite[Definition 2.1.13]{Bacak14}). 
It is clear from the definition that $F:D\to D$ satisfies \eqref{e:RLN fne} 
if and only if $\phi_F$ is a nonincreasing function on $[0,1]$ for all 
 $x,y\in D$.  
Banert \cite[Remark pp.658]{Banert} shows that any mapping $F$ satisfying \eqref{e:RLN fne} for all 
$\lambda\in(0,1]$ is $\alpha$-firmly 
nonexpansive with constant $\alpha=1/2$. 

The transport discrepancy $\psi$ is the key to identifying the regularity required for 
convergence of fixed point iterations in metric spaces and the definition makes clear the 
contribution of the geometry of the space.
\begin{lemma}[$\psi$ is nonnegative in CAT($0$) spaces, Proposition 4 of \cite{BLL}]\label{lem:psi_f nonneg}
Let $(G,d)$ be a CAT($0$) metric space and $\mymap{F}{D}{G}$ for $D\subset G$.  
Then the transport discrepancy defined by \eqref{eq:delta} is nonnegative for all $x,y\in D$. 
Moreover, if $F$ is pointwise a$\alpha$-fne at $x_0\in D$ 
with violation $\epsilon$ on $D$, then $F$ is pointwise ane at $x_0$ on $D$ with 
violation at most $\epsilon$.  
\end{lemma}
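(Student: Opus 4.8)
The plan is to prove the two assertions separately, with the second following at once from the first, so the entire weight of the lemma rests on showing $\psi\geq 0$. My approach to nonnegativity is to recognize \eqref{eq:delta} as a \emph{four-point (quadrilateral) inequality}. Relabelling the four points in cyclic order as $p_1=x$, $p_2=Fx$, $p_3=Fx_0$, $p_4=x_0$, the four positive terms of $\psi$ are exactly the squared sides $d^2(p_1,p_2),d^2(p_2,p_3),d^2(p_3,p_4),d^2(p_4,p_1)$ and the two subtracted terms are the squared diagonals $d^2(p_1,p_3),d^2(p_2,p_4)$. Thus $\psi\geq 0$ is equivalent to
\[
d^2(p_1,p_3)+d^2(p_2,p_4)\;\leq\; d^2(p_1,p_2)+d^2(p_2,p_3)+d^2(p_3,p_4)+d^2(p_4,p_1),
\]
i.e.\ the sum of squared diagonals is dominated by the sum of squared sides. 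In a Euclidean space this is transparent, since there $\psi$ collapses to $\lVert (x-Fx)-(x_0-Fx_0)\rVert^2\geq 0$; the genuine content is to recover the inequality from the comparison geometry of a CAT($0$) space, where no such algebraic identity is available.

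To establish the four-point inequality I would use the defining convexity inequality \eqref{e:p-ucvx} in the CAT($0$) regime $p=c=2$ at the midpoint $t=\tfrac12$, which reads
\[
d(z,m)^2\;\leq\;\tfrac12 d(z,u)^2+\tfrac12 d(z,v)^2-\tfrac14 d(u,v)^2
\]
for $m$ the (unique) midpoint of $u$ and $v$. First I apply this along the diagonal $u=p_1$, $v=p_3$ with the two test points $z=p_2$ and $z=p_4$, obtaining upper bounds for $d(m,p_2)^2$ and $d(m,p_4)^2$. Next I apply it along the other diagonal, $u=p_2$, $v=p_4$, with $z=m$, and discard the left-hand side using $d(m,m')^2\geq 0$ ($m'$ the midpoint of $p_2,p_4$); this gives $\tfrac14 d(p_2,p_4)^2\leq \tfrac12 d(m,p_2)^2+\tfrac12 d(m,p_4)^2$. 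Substituting the two bounds from the first step and clearing the common factor $\tfrac14$ assembles exactly the displayed four-point inequality, so $\psi\geq 0$. Since the argument only ever sees the four points, it applies verbatim to a selection $y^+\in Fy$ in the set-valued reading of the statement.

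The second assertion is then immediate. If $F$ is pointwise a$\alpha$-fne at $x_0$ on $D$ with violation $\epsilon$, then \eqref{eq:paafne} gives, for every $x\in D$,
\[
d^2(Fx,Fx_0)\;\leq\;(1+\epsilon)\,d^2(x,x_0)-\tfrac{1-\alpha}{\alpha}\,\psi(x,x_0,Fx,Fx_0).
\]
Because $\alpha\in(0,1)$ the coefficient $\tfrac{1-\alpha}{\alpha}$ is strictly positive, and by the first part $\psi\geq 0$, so the subtracted term is nonpositive and may simply be dropped: $d^2(Fx,Fx_0)\leq (1+\epsilon)d^2(x,x_0)$. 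Taking square roots yields \eqref{eq:pane} with violation $\epsilon$, i.e.\ $F$ is pointwise ane at $x_0$ with violation at most $\epsilon$.

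I expect the only real obstacle to be the bookkeeping in the two-midpoint step: one must apply the CAT($0$) inequality along the correct diagonal each time and keep track of the signs and coefficients of the $-\tfrac14 d(\cdot,\cdot)^2$ terms so that they recombine into the two diagonal contributions with the right weights. Everything past that is direct substitution together with the elementary sign observation $\tfrac{1-\alpha}{\alpha}>0$.
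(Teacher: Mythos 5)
Your proposal is correct. Note first that the paper does not actually prove this lemma: it simply cites \cite[Proposition 4]{BLL}, so there is no in-text argument to compare against line by line. What you supply is a genuine self-contained proof, and it checks out. Your relabelling correctly identifies the four positive terms of \eqref{eq:delta} as the squared sides of the quadrilateral $(x, Fx, Fx_0, x_0)$ and the two negative terms as the squared diagonals, so that $\psi\geq 0$ is exactly the CAT($0$) quadrilateral inequality. The two-midpoint computation is right: applying \eqref{e:p-ucvx} with $p=c=2$, $t=\tfrac12$ along the diagonal $p_1p_3$ at $z=p_2$ and $z=p_4$, then along $p_2p_4$ at $z=m$ and discarding $d(m,m')^2\geq 0$, yields $\tfrac14\bigl(d^2(p_1,p_3)+d^2(p_2,p_4)\bigr)\leq\tfrac14\sum d^2(\text{sides})$ after the two $-\tfrac14 d^2(p_1,p_3)$ contributions are averaged; clearing the factor $\tfrac14$ gives the claim. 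This is essentially the argument behind \cite[Proposition 4]{BLL}, so you have reconstructed the cited proof rather than found a new one, but having it written out is a net gain for a reader who does not want to chase the reference. The deduction of the pointwise ane property from the pointwise a$\alpha$-fne inequality by dropping the term $\tfrac{1-\alpha}{\alpha}\psi\geq 0$ and taking square roots is exactly right and is the only use the paper makes of the first assertion.
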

In CAT($\kappa$) spaces the above statement does not hold.  It is well known, for example,  
that  in a CAT($\kappa$) metric space the projector onto a convex set is 
$\alpha$-fne with $\alpha=1/2$, but it is not nonexpansive \cite{AriLeuLop14}.  
In Hadamard space settings, when a mapping is firmly nonexpansive, it is clear that it 
is also nonexpansive.  This implication was shown in \cite{AriLeuLop14} to be a 
consequence of Busemann convexity 
and does not hold in general metric spaces.  Nevertheless, the 
implication is recovered
for $p$-uniformly convex spaces for {\em pointwise} firmly nonexpansive mappings
{\em at their fixed points}, since the corresponding transport discrepancy $\psi$ 
is nonnegative in this case \cite[Proposition 4(i)]{BLL}.  
It would be interesting to investigate these notions in Busemann spaces where, based
on known extensions of the tools of variational analysis to Banach spaces,  
we conjecture that many of these notions of regularity carry over.  

On normed linear spaces, when $\|\cdot\|$ is the norm induced by the inner product and 
$d(x,y)=\|x-y\|$, the transport discrepancy $\psi$ defined by \eqref{eq:delta} has 
the representation 
\begin{equation} \label{eq:nice ineq}
\psi(x,x_0, Fx, Fx_0) =  \|(x-Fx)-(x_0-Fx_0)\|^2.
\end{equation}
This representation 
shows the connection between our definition and more classical notions. 
Indeed, in a Hilbert space setting $(G, \|\cdot\|)$, 
a mapping $\mymap{F}{D}{G}$ $(D\subset G)$
is pointwise a$\alpha$-fne  at $x_0$ 
with constant $\alpha$ and violation at most $\epsilon$ on $D$ if and only 
if \cite[Proposition 2.1]{LukNguTam18}
 \begin{eqnarray}
&&\|Fx - Fx_0\|^2 \le (1+\epsilon)\|x - x_0\|^2 -  
\tfrac{1-\alpha}{\alpha}\left\|((x-Fx) - (x_0-Fx_0) \right\|^2
\qquad \forall x \in D.\label{eq:paafne2}
\end{eqnarray}

\subsection{Main results}\label{sec:main statements}
We can now state the main theorem concerning  Markov operators $\Pcal$ with
  update function $\Phi(x,i)=T_i(x)$ and transition kernel 
  $p$ given by \eqref{eq:trans kernel} for self mappings 
  $\mymap{T_i}{G}{G}$.
For any $\mu_0\in \mathscr{P}_2(G)$, we denote 
the distributions of the iterates of Algorithm \ref{algo:RFI} by 
$\mu_{k} =  \mu_0 \mathcal{P}^{k} = \mathcal{L}(X_{k})$, 
and we denote  $d_{W_2}\paren{\mu_k,\inv\mathcal{P}}\equiv 
\inf_{\pi'\in\inv\mathcal{P}}W_2\paren{\mu_{k},\, \pi'}$.  
It will be assumed that $\inv\mathcal{P}\neq\emptyset$.  

Convergence is quantified by an implicitly defined gauge function. 
Recall that $\rho:[0,\infty) \to [0,\infty)$ is a \textit{gauge function} if 
$\rho$ is continuous, strictly increasing 
with $\rho(0)=0$, and $\lim_{t\to \infty}\rho(t)=\infty$. 
This is defined in terms of another nonnegative mapping 
$\mymap{\theta_{\tau,\epsilon}}{[0,\infty)}{[0,\infty)}$
with parameters $\tau>0$ and $\epsilon\geq 0$:
\begin{eqnarray}\label{eq:theta}
(i)~ \theta_{\tau,\epsilon}(0)=0, \mbox{ and}\quad (ii)~ 0<\theta_{\tau,\epsilon}(t)<t
~\forall t\in(0,\tbar]\mbox{ for some }\tbar>0.  
\end{eqnarray}
The gauge $\rho$ is given implicitly by 
\begin{equation}\label{eq:gauge}
 \rho\paren{\paren{\frac{(1+\epsilon)t^2-\paren{\theta_{\tau,\epsilon}(t)}^2}{\tau}}^{1/2}}=
 t\quad\iff\quad
 \theta_{\tau,\epsilon}(t) = \paren{(1+\epsilon)t^2 - \tau\paren{\rho^{-1}(t)}^2}^{1/2}
\end{equation}
for $\tau>0$ fixed.  

The rate of convergence of the sequences of measures will be determined by $\theta_{\tau, \epsilon}$,
with parameters $\tau$ and $\epsilon$ given by a certain characterization of the regularity of the Markov 
operator $\Pcal$ (see Definition \ref{d:afne oa}), 
which translates to a second regularity of the Markov operator $\Pcal$, metric subregularity 
(see Definition \ref{d:(str)metric (sub)reg}) via the  gauge $\rho$.  To quantify linear 
or superlinear convergence it suffices to require, in addition to \eqref{eq:theta},
\begin{eqnarray}\label{eq:theta summable}
\sum_{j=1}^\infty\theta_{\tau,\epsilon}^{(j)}(t)<\infty
~\forall t\in(0,\tbar]\mbox{ for some }\tbar>0.  
\end{eqnarray}
In the case of linear convergence 
this becomes 
\[
\rho(t)=r  t\quad\iff\quad  
\theta_{\tau,\epsilon}(t)=\paren{(1+\epsilon)-\frac{\tau}{r^2}}^{1/2}t\qquad 
(r \geq \sqrt{\tfrac{\tau}{(1+\epsilon)}}).
\]  
The conditions in \eqref{eq:theta} in this 
case simplify to $\theta_{\tau,\epsilon}(t)=\gamma t$ where 
\begin{equation}\label{eq:theta linear}
 0< \gamma\equiv 1+\epsilon-\frac{\tau}{r^2}<1\quad\iff\quad 
\sqrt{\tfrac{\tau}{(1+\epsilon)}}\leq  r \leq \sqrt{\tfrac{\tau}{\epsilon}}.
\end{equation}

\begin{thm}[convergence rates]\label{t:msr convergence} 
  Let $(H,d)$ be a separable Hadamard space and let $G\subset H$ be compact.  
  Let $\mymap{T_i}{G}{G}$ be continuous for all $i\in I$ and define
  $\mymap{\Psi}{\mathscr{P}_2(G)}{\mathbb{R}_+}\cup\{+\infty\}$ by
 \begin{equation}\label{eq:Psi}
\Psi(\mu)\equiv \inf_{\pi\in\inv\mathcal{P}}\inf_{\gamma\in C_*(\mu,\pi)}
\left(\int_{G\times G}
\mathbb{E}\left[\psi(x,y, T_\xi x, T_\xi y)\right]\ \gamma(dx, dy)\right)^{1/2}.
 \end{equation}
  Assume furthermore:
  \begin{enumerate}[(a)]
  \item \label{t:msr convergence c}  there is at least one 
  $\pi \in\inv\mathcal{P}\cap \mathscr{P}_2(G)$  where $\mathcal{P}$ is the Markov operator with
  update function $\Phi$ given by \eqref{eq:trans kernel};
  \item\label{t:msr convergence a} $\Phi$ satisfies
  \begin{eqnarray}\label{eq:aafne i.e.}
    &&\exists \epsilon\in [0,1), \alpha\in (0,1):\quad \forall x,y \in G,\\
    &&\quad\mathbb{E}\ecklam{d^2(\Phi(x,\xi),\Phi(y,\xi))}\leq 
    (1+\epsilon)d^2(x,y) - 
    \tfrac{1-\alpha}{\alpha}\mathbb{E}\left[\psi(x,y, \Phi(x,\xi), \Phi(y,\xi))\right],
    \nonumber
  \end{eqnarray}
  and
  \item\label{t:msr convergence b} 
    $\Psi(\pi)=0\iff \pi\in\inv\mathcal{P}$ and for all $\mu\in\mathscr{P}_{2}(G)$ 
  \begin{eqnarray}
    \inf_{\pi\in\inv\mathcal{P}} W_2(\mu, \pi) &=& 
    \inf_{\pi\in\Psi^{-1}(0)}W_2(\mu, \pi)
    \nonumber\\
    &\le& 
    \rho\paren{d_{\mathbb{R}}(0,\Psi(\mu))}
    = \rho(\Psi(\mu)).\nonumber
  \end{eqnarray}
    with gauge $\rho$ given implicitly by \eqref{eq:gauge} with 
    $\tau=(1-\alpha)/\alpha$ and the function $\theta_{\tau,\epsilon}$ satisfying \eqref{eq:theta}
    where $t_0\equiv d_{W_2}\paren{\mu_0, \inv\mathcal{P}\cap\mathscr{P}_2(G)}<\tbar$ for all $\mu_0\in \mathscr{P}_2(G)$.
\end{enumerate}
Then for any $\mu_0\in \mathscr{P}_2(G)$ 
the distributions $(\mu_k)$ of the iterates of Algorithm \ref{algo:RFI} 
satisfy 
\begin{equation}\label{eq:gauge convergence}
d_{W_2}\paren{\mu_{k+1},\inv\mathcal{P}}
\leq \theta_{\tau,\epsilon}\paren{d_{W_2}\paren{\mu_k,\inv\mathcal{P}}} 
\quad \forall k \in \mathbb{N}.
\end{equation}%
If in addition $\theta_{\tau,\epsilon}$ satisfies \eqref{eq:theta summable}, then 
$\mu_k\to \pi^{\mu_0}\in\inv\mathcal{P}\cap\mathscr{P}_2(G)$ as $k\to\infty$ 
in the $W_2$ metric with rate $O(s_k(t_0))$ 
where $s_k(t_0)\equiv\lim_{N\to\infty}\sum_{j=k}^N \theta_{\tau,\epsilon}^{(j)}(t_0)$.
\end{thm}

An immediate corollary of this theorem is the following 
specialization to linear convergence. 
\begin{cor}[linear convergence rates]
\label{t:msr convergence - linear} 
Under the same assumptions as in Theorem \ref{t:msr convergence},
if $\Psi$ satisfies \eqref{t:msr convergence b} 
with gauge $\rho(t)=r \cdot t$ and constant $r $
satisfying $\sqrt{\frac{1-\alpha}{\alpha(1+\epsilon)}}\leq r \
<\sqrt{\frac{1-\alpha}{\alpha\epsilon}}$, 
then the sequence of iterates $(\mu_k)$ converges 
R-linearly to 
some $\pi^{\mu_0}\in\inv\mathcal{P}\cap\mathscr{P}_2(G)$:  
\begin{equation}%
d_{W_2}\paren{\mu_{k+1},\inv\mathcal{P}}
\leq c\, d_{W_2}\paren{\mu_k,\inv\mathcal{P}}
\end{equation}%
where $c\equiv \sqrt{1+\epsilon -\paren{\tfrac{1-\alpha}{r^2\alpha}}}<1$
and $r \geq r'$ satisfies $r\geq\sqrt{(1-\alpha)/\alpha(1+\epsilon)}$.
If $\inv\mathcal{P}$ consists of a single point then convergence is 
Q-linear.  
\end{cor}

\section{Background theory and proofs of the main results.}
As indicated by the discussion following Definition \ref{d:a-fne}, assumption \eqref{t:msr convergence a} 
of Theorem \ref{t:msr convergence} has deep roots in fixed point theory.  Assumption  
\eqref{t:msr convergence b} has a similarly central significance in variational analysis.  
We develop each of these assumptions for the present setting in order. 

\subsection{Almost $\alpha$-firm nonexpansive mappings in expectation}
To begin, we develop assumption \eqref{t:msr convergence a} of Theorem \ref{t:msr convergence}. 
The next definition uses the update function $\Phi$ defined 
in Assumption \ref{ass:1}(b) and is an extension of \cite[Definition 2.8]{HerLukStu22a}.   
\begin{definition}[pointwise almost ($\alpha$-firmly) nonexpansive in
  expectation]\label{d:afne oa}
  Let $(G,d)$ be a CAT($0$) metric space, let $\mymap{T_i}{G}{G}$ for
  $i\in I$, and let 
   $\mymap{\Phi}{G\times I}{G}$ be given by $\Phi(x,i) = T_{i}x$.
  Let $\psi$ be defined by \eqref{eq:delta} and   
let $\xi$ be an $I$-valued random variable.  
  \begin{enumerate}[(i)]
  \item   The mapping $\Phi$ is said to be \emph{pointwise
      almost nonexpansive in expectation at $x_0\in G$} on $G$, abbreviated 
      {\em pointwise ane in expectation}, whenever 
    \begin{align}\label{eq:panee}
		\exists \epsilon\in [0,1):\quad 
		\mathbb{E}\ecklam{d(\Phi(x,\xi), \Phi(x_{0},\xi))} \le \sqrt{1+\epsilon}\,
		d(x,x_0), \qquad \forall x \in G.
    \end{align}
    When the above inequality holds for all $x_0\in G$ then
    $\Phi$ is said to be {\em almost nonexpansive -- ane -- in expectation on
      $G$}.  As before, the violation is a value of  $\epsilon$ for which 
  \eqref{eq:panee} holds. When the violation is $0$, the qualifier ``almost'' is dropped. 
  \item The mapping $\Phi$ is said to be {\em pointwise almost $\alpha$-firmly 
nonexpansive  in expectation at $x_0\in G$} on $G$, abbreviated {\em pointwise a$\alpha$-fne
in expectation},  whenever
  \begin{eqnarray}\label{eq:paafne i.e.}
&&\exists \epsilon\in [0,1), \alpha\in (0,1):\quad \forall x \in G,\\
&&\quad\mathbb{E}\ecklam{d^2(\Phi(x,\xi),\Phi(x_0,\xi))}\leq 
(1+\epsilon)d^2(x,x_0) - 
\tfrac{1-\alpha}{\alpha}\mathbb{E}\left[\psi(x,x_0, \Phi(x,\xi), \Phi(x_0,\xi))\right].
\nonumber
\end{eqnarray}
When the above inequality holds for all $x_0\in G$ then $\Phi$ is said to be 
{\em almost $\alpha$-firmly nonexpansive (a$\alpha$-fne) in expectation on $G$}.  The 
violation is a value of $\epsilon$ for which \eqref{eq:paafne i.e.} holds. 
When the violation is $0$, the qualifier ``almost'' is dropped and the abbreviation 
{\em $\alpha$-fne in expectation} is used. 
  \end{enumerate}
\end{definition}
\begin{prop}\label{r:nonneg psi_Phi}
Let $(G,d)$ be a CAT($0$) space.  The mapping 
$\mymap{\Phi}{G\times I}{G}$ given by $\Phi(x,i) = T_{i}x$
is pointwise a$\alpha$-fne in 
expectation  at $y$ on $G$ with constant $\alpha$ and violation at most $\epsilon$
and pointwise ane in expectation  at $y$ on $G$ 
with violation at most $\epsilon$ 
whenever $T_i$ is pointwise a$\alpha$-fne at $y$ on $G$ with constant $\alpha$ and 
violation no greater than $\epsilon$ for all $i$. 
\end{prop}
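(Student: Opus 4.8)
The plan is to obtain both assertions by taking expectations over the random index $\xi$ in the pointwise-in-$i$ inequalities furnished by the hypothesis. The defining inequality of Definition \ref{d:a-fne}(ii) holds for every fixed $i\in I$ with one and the same constant $\alpha$ and violation bound $\epsilon$; consequently the passage to the ``in expectation'' statements of Definition \ref{d:afne oa} reduces to substituting the random variable $\xi$ for the deterministic index and integrating.

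First I would treat the a$\alpha$-fne in expectation claim. By hypothesis, for every $i\in I$ and every $x\in G$,
\begin{align*}
d^2(T_i x, T_i y)\leq (1+\epsilon)\,d^2(x,y)-\tfrac{1-\alpha}{\alpha}\,\psi(x,y,T_i x, T_i y).
\end{align*}
Evaluating at $i=\xi(\omega)$ and using $\Phi(x,\xi)=T_\xi x$ gives, for every $\omega\in\Omega$,
\begin{align*}
d^2(\Phi(x,\xi),\Phi(y,\xi))\leq (1+\epsilon)\,d^2(x,y)-\tfrac{1-\alpha}{\alpha}\,\psi(x,y,\Phi(x,\xi),\Phi(y,\xi)).
\end{align*}
Because $(1+\epsilon)\,d^2(x,y)$ is deterministic, taking expectations and using linearity and monotonicity of $\mathbb{E}$ yields exactly \eqref{eq:paafne i.e.}.

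For the ane in expectation claim I would first invoke Lemma \ref{lem:psi_f nonneg}: since $(G,d)$ is CAT($0$), nonnegativity of $\psi$ guarantees that each $T_i$, being pointwise a$\alpha$-fne at $y$ with violation at most $\epsilon$, is pointwise ane at $y$ with violation at most $\epsilon$, that is $d(T_i x, T_i y)\leq\sqrt{1+\epsilon}\,d(x,y)$ for all $x\in G$ and all $i\in I$. Substituting $i=\xi$ and taking expectations, monotonicity of $\mathbb{E}$ delivers $\mathbb{E}[d(\Phi(x,\xi),\Phi(y,\xi))]\leq\sqrt{1+\epsilon}\,d(x,y)$, which is \eqref{eq:panee}.

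The only delicate point---and the main, though mild, technical obstacle---is ensuring that the expectations are well defined. Assumption \ref{ass:1}(b) guarantees that $(x,i)\mapsto\Phi(x,i)$ is measurable, and since $d$, and therefore $\psi$, is continuous in its arguments, the maps $\omega\mapsto d^2(\Phi(x,\xi),\Phi(y,\xi))$ and $\omega\mapsto\psi(x,y,\Phi(x,\xi),\Phi(y,\xi))$ are measurable as compositions of measurable and continuous functions. This justifies every integration performed above and completes the argument.
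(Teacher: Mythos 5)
Your proposal is correct and follows essentially the same route as the paper: invoke Lemma \ref{lem:psi_f nonneg} to get nonnegativity of $\psi$ (hence well-definedness of its expectation and the pointwise ane property of each $T_i$), then pass to the ``in expectation'' statements by integrating the pointwise-in-$i$ inequalities over $\xi$. Your explicit measurability remarks simply spell out what the paper leaves implicit.
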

\begin{proof}
 By Lemma \ref{lem:psi_f nonneg},  whenever 
$(G,d)$ is a CAT($0$) space 
$\psi(x,y, \Phi(x,i), \Phi(y,i))\geq 0$ for all $i$ and for all $x,y\in G$, so 
the expectation $\mathbb{E}\left[\psi(x,y, \Phi(x,\xi), \Phi(y,\xi))\right]$ 
is well-defined and nonnegative for all $x,y\in G$ (the value $+\infty$ can be attained).  
This implies that, for all $i$, $T_i$ is pointwise ane at $y$ on $G$ with violation at most 
$\epsilon$ on $G$ whenever it is pointwise a$\alpha$-fne at $y$ with constant 
$\alpha$ on $G$ with violation at most 
$\epsilon$ on $G$ for all $i$.  It follows immediately from the definition, then, that  
$\Phi$ is pointwise a$\alpha$-fne in expectation at $y$ with constant 
$\alpha$ on $G$ with violation at most 
$\epsilon$ on $G$, and also pointwise ane in expectation at $y$ on $G$ with violation at most 
$\epsilon$ on $G$.  
\end{proof}

Following \cite[Defintion 2.10]{HerLukStu22a} we lift these notions of regularity to Markov operators.  
Denote the set of couplings where the distance 
$W_2(\mu_1, \mu_2)$ is attained by 
\begin{equation}\label{eq:Gamma opt}
 C_*(\mu_1,\mu_2)\equiv \left\{\gamma\in C(\mu_1, \mu_2)~\big{|}~ 
\int_{G\times G}d^2(x,y)\ \gamma(dx, dy) = W_2^2(\mu_1,\mu_2)\right\}.
\end{equation}
Even though $W_2(\mu_1,\mu_2)$ is defined as the infimum over all couplings, 
whenever this is finite the infimum is attained, and hence in this case $C_*(\mu_1,\mu_2)$
is nonempty \cite[Lemma A.7]{HerLukStu22a}. 

\begin{definition}[pointwise almost ($\alpha$-firmly) nonexpansive Markov operators]\label{d:afne rw}
  Let $(G,d)$ be a CAT($0$) metric space, and let 
$\mathcal{P}$ be a Markov operator with transition kernel
\[
  (x\in G) (A\in
  \mathcal{B}(G)) \qquad p(x,A) \equiv \mathbb{P}(\Phi(x,\xi) \in A)
\]
where $\xi$ is an $I$-valued random variable and 
$\mymap{\Phi}{G\times  I}{G}$ is a measurable update function. 
  Let $\psi$ be defined by \eqref{eq:delta}.
  \begin{enumerate}[(i)]
  \item   The Markov operator is said to be \emph{pointwise
	  almost nonexpansive in measure at $\mu_0\in \mathscr{P}(G)$} on $\mathscr{P}(G)$, 
	  abbreviated {\em pointwise ane in measure}, whenever 
    \begin{align}\label{eq:paneim}
		\exists \epsilon\in [0,1):\quad W_2(\mu\Pcal, \mu_0\Pcal) \le \sqrt{1+\epsilon}\,
		W_2(\mu, \mu_0), \qquad \forall \mu\in \mathscr{P}(G).
    \end{align}
    When the above inequality holds for all $\mu_0\in \mathscr{P}(G)$ then
    $\Pcal$ is said to be {\em almost nonexpansive (ane)  in measure on
      $\mathscr{P}(G)$}.  As before, the violation is a value of  $\epsilon$ for which 
  \eqref{eq:paneim} holds. When the violation is $0$, the qualifier ``almost'' is dropped. 
  \item The Markov operator $\Pcal$ is said to be {\em pointwise almost $\alpha$-firmly 
nonexpansive  in measure at $\mu_0\in G$} on $\mathscr{P}(G)$, 
abbreviated {\em pointwise a$\alpha$-fne in measure}, 
  whenever
  \begin{eqnarray}
&&\exists \epsilon\in [0,1), \alpha\in (0,1): 
\qquad \forall\mu\in \mathscr{P}(G),\forall \gamma\in C_*(\mu, \mu_0)
\nonumber\\
&& W_2(\mu\Pcal, \mu_0\Pcal)^2\leq 
(1+\epsilon)W_2(\mu, \mu_0)^2 - \nonumber\\
&&\qquad \qquad\qquad \qquad
\tfrac{1-\alpha}{\alpha}\int_{G\times G}\mathbb{E}\left[\psi(x,y, \Phi(x,\xi), \Phi(y,\xi))\right] \gamma(dx, dy).
	  \label{eq:paafne i.m.}
\end{eqnarray}
When the above inequality holds for all $\mu_0\in \mathscr{P}(G)$ then $\Pcal$ is said to be 
{\em a$\alpha$-fne in measure on $\mathscr{P}(G)$}.  The 
violation is a value of $\epsilon$ for which \eqref{eq:paafne i.m.} holds. 
When the violation is $0$, the qualifier ``almost'' is dropped and the abbreviation 
{\em $\alpha$-fne in measure} is employed. 
  \end{enumerate}
\end{definition}

\begin{rem}\label{r:nonneg psi_Phi2}
By Lemma \ref{lem:psi_f nonneg},  when $(G, d)$ is a CAT($0$) space 
 the expectation on the right hand side of \eqref{eq:paafne i.m.} is nonnegative, and 
 the corresponding Markov operator is pointwise ane in measure 
 at $\mu_0$ whenever it is pointwise a$\alpha$-fne in measure at $\mu_0$
 (Proposition \ref{r:nonneg psi_Phi}). 
 In particular, when  
$\mu=\mu_0\in \inv\mathcal{P}$ the left hand side is zero and   
\[
\int_{G\times G} 
\mathbb{E}\left[\psi(x,y, T_\xi x, T_\xi y)\right]\ \gamma(dx, dy) = 0.
\]
Here the optimal coupling is the diagonal of the product space $G\times G$
and $\psi(x,x, T_\xi x, T_\xi x)=0$ for all $x\in G$.  
\end{rem}

\begin{prop}\label{thm:Tafne in exp 2 pafne of P}
  Let $(G,d)$ be a separable Hadamard space (a complete CAT($0$) space) and 
  $\mymap{T_i}{G}{G}$ for $i\in I$, let 
   $\mymap{\Phi}{G\times I}{G}$ be given by $\Phi(x,i) = T_{i}x$
   and let $\psi$ be defined by \eqref{eq:delta}. 
   Denote  by $\mathcal{P}$  the Markov operator with update function $\Phi$ and 
   transition kernel $p$ defined by
  \eqref{eq:trans kernel}.
  If $~\Phi$  is a$\alpha$-fne in expectation on $G$ with
  constant $\alpha\in (0,1)$ and violation $\epsilon\in [0,1)$, then the Markov operator 
  $\mathcal{P}$ is a$\alpha$-fne 
  in measure on $\mathscr{P}_2(G)$ 
  with constant $\alpha$ and violation at most $\epsilon$, that is, $\Pcal$ 
  satisfies 
  \begin{eqnarray}
W_2^{2}(\mu_1\mathcal{P}, \mu_2\mathcal{P})&\le&    
   (1+\epsilon)W_2^{2}(\mu_1, \mu_2) -  
   \tfrac{1-\alpha}{\alpha}  \int_{G\times G}
   \mathbb{E}\ecklam{\psi(x,y, \Phi(x,\xi), \Phi(y,\xi))}\ \gamma(dx, dy)
   \nonumber\\
\label{eq:alphfne meas}&&   
\qquad\qquad\qquad\qquad   
 \forall \mu_2, \mu_1\in \mathscr{P}_2(G), \ \forall \gamma\in C_*(\mu_1,\mu_2).  
  \end{eqnarray}
\end{prop}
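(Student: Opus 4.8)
The plan is to lift the pointwise (in expectation) regularity of $\Phi$ to the level of measures by the standard device of a \emph{synchronous coupling}: starting from an optimal coupling $\gamma$ of $\mu_1$ and $\mu_2$, I transport both coordinates simultaneously using one and the same realization of the random index $\xi$, producing an admissible (generally suboptimal) coupling of the push-forwards $\mu_1\Pcal$ and $\mu_2\Pcal$ whose cost I can control by the hypothesis. Fix $\mu_1,\mu_2\in\mathscr{P}_2(G)$ and $\gamma\in C_*(\mu_1,\mu_2)$ (nonempty since $W_2(\mu_1,\mu_2)<\infty$ on $\mathscr{P}_2(G)$). On a product probability space carry a pair $(X,Y)\sim\gamma$ together with a copy of $\xi$ independent of $(X,Y)$, and set $\tilde X:=\Phi(X,\xi)=T_\xi X$ and $\tilde Y:=\Phi(Y,\xi)=T_\xi Y$.

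First I would verify that $\tilde\gamma:=\mathcal{L}(\tilde X,\tilde Y)$ is a coupling of $\mu_1\Pcal$ and $\mu_2\Pcal$. By independence of $\xi$ and $(X,Y)$ together with disintegration, for every $A\in\mathcal{B}(G)$,
\begin{align*}
\mathbb{P}(\tilde X\in A)=\mathbb{P}(\Phi(X,\xi)\in A)=\int_G \mathbb{P}(\Phi(x,\xi)\in A)\,\mu_1(dx)=\int_G p(x,A)\,\mu_1(dx)=(\mu_1\Pcal)(A),
\end{align*}
and the analogous computation gives $\mu_2\Pcal$ for the second marginal. Since $W_2^2$ is the infimum of the transport cost over \emph{all} couplings, $\tilde\gamma$ delivers the upper estimate
\begin{align*}
W_2^2(\mu_1\Pcal,\mu_2\Pcal)\le\int_{G\times G}d^2(u,v)\,\tilde\gamma(du,dv)=\mathbb{E}\big[d^2(\Phi(X,\xi),\Phi(Y,\xi))\big].
\end{align*}
Because $\xi\indep(X,Y)$ and the integrand is nonnegative and jointly measurable (the update function $\Phi$ is measurable by Assumption \ref{ass:1}(b) and $d^2$ is continuous), Fubini--Tonelli lets me peel off the expectation over $\xi$ and rewrite this last quantity as $\int_{G\times G}\mathbb{E}[d^2(\Phi(x,\xi),\Phi(y,\xi))]\,\gamma(dx,dy)$.

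It then remains to substitute the hypothesis and integrate. For each fixed $(x,y)$, Definition \ref{d:afne oa}(ii), i.e.\ \eqref{eq:paafne i.e.}, bounds $\mathbb{E}[d^2(\Phi(x,\xi),\Phi(y,\xi))]$ by $(1+\epsilon)d^2(x,y)-\tfrac{1-\alpha}{\alpha}\mathbb{E}[\psi(x,y,\Phi(x,\xi),\Phi(y,\xi))]$; integrating this pointwise estimate against $\gamma$ and using optimality of $\gamma$ in the form $\int_{G\times G}d^2(x,y)\,\gamma(dx,dy)=W_2^2(\mu_1,\mu_2)$ produces exactly \eqref{eq:alphfne meas}. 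The nonnegativity of $\psi$ on a CAT($0$) space (Lemma \ref{lem:psi_f nonneg}) ensures the subtracted term is a well-defined element of $[0,\infty]$, and integrating the chain $0\le\mathbb{E}[d^2(\Phi(x,\xi),\Phi(y,\xi))]+\tfrac{1-\alpha}{\alpha}\mathbb{E}[\psi(x,y,\Phi(x,\xi),\Phi(y,\xi))]\le(1+\epsilon)d^2(x,y)$ against $\gamma$ shows both the transported cost and the $\psi$-integral are finite, so no $-\infty$ ambiguity arises on the right-hand side. I expect the only genuinely non-routine point to be the coupling/Fubini step: one must confirm that transporting both coordinates by the \emph{same} draw of $\xi$ yields a legitimate coupling of the two push-forwards (so that the infimum defining $W_2$ really dominates its cost) and that the interchange of $\mathbb{E}_\xi$ with integration against $\gamma$ is justified; after that the claim follows by direct substitution.
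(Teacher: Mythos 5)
Your proof is correct and follows essentially the same route as the paper's: the paper also bounds $W_2^2(\mu_1\Pcal,\mu_2\Pcal)$ by the cost of the synchronous coupling $(\Phi(X,\xi),\Phi(Y,\xi))$ with $(X,Y)\sim\gamma$ optimal and $\xi$ independent, then integrates the pointwise a$\alpha$-fne-in-expectation inequality against $\gamma$ and uses $\int d^2\,d\gamma=W_2^2(\mu_1,\mu_2)$. You merely make explicit two steps the paper leaves implicit (that the push-forward law is a legitimate coupling of $\mu_1\Pcal$ and $\mu_2\Pcal$, and the Fubini--Tonelli interchange), which is a harmless and arguably welcome elaboration.
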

\begin{proof} If $W_2(\mu_1,\mu_2)=\infty$ the inequality holds trivially 
with the convention 
$+\infty-(+\infty)= +\infty$.    
So consider the case where $W_2(\mu_1,\mu_2)$ is finite.  Since $(G, d)$ is a separable, 
complete metric 
space, by \cite[Lemma A.7]{HerLukStu22a}
the set of  optimal couplings $C_*(\mu_1, \mu_2)$ is nonempty.  
Since $\Phi$ is a$\alpha$-fne in expectation on $G$ with 
constant $\alpha$ and violation $\epsilon$, we have
\begin{eqnarray*}
&&\int_{G\times G}\mathbb{E}\left[d^2(\Phi(x, \xi),\Phi(y, \xi))\right]\ \hat{\gamma}(dx, dy)
\leq \\
&&\qquad\qquad\qquad\qquad 
\int_{G\times G} \paren{(1+\epsilon) d^2(x,y)    - \tfrac{1-\alpha}{\alpha} 
\mathbb{E}\left[\psi_c(x,y, \Phi(x, \xi), \Phi(y, \xi))\right]}
\ \hat{\gamma}(dx, dy),
  \end{eqnarray*}
  where $\hat{\gamma}$ is any coupling in $ C(\mu_1,\mu_2)$, not 
  necessarily optimal.  In particular, 
  since, for a random variable $X\sim \mu_1$, we have $\Phi(X, \xi)\sim \mu_1\mathcal{P}$,
  and for a random variable $Y\sim \mu_2$, 
  we have $\Phi(Y, \xi)\sim \mu_2\mathcal{P}$, 
  then, again for any optimal coupling $\gamma\in C_*(\mu_1,\mu_2)$,
   \begin{eqnarray*}
   W_2^2(\mu_1\mathcal{P},\mu_2\mathcal{P})&\leq& 
   \int_{G\times G}\mathbb{E}\left[d^2(\Phi(x, \xi),\Phi(y, \xi))\right]\ \gamma(dx, dy)
  \nonumber\\
  &\leq& 
\int_{G\times G} \paren{(1+\epsilon) d^2(x,y)    - \tfrac{1-\alpha}{\alpha} 
\mathbb{E}\left[\psi_c(x,y, \Phi(x, \xi), \Phi(y, \xi))\right]}
\ \gamma(dx, dy)\nonumber \\
&=&
(1+\epsilon) W_2^2(\mu_1,\mu_2) - \int_{G\times G}\tfrac{1-\alpha}{\alpha} 
\mathbb{E}\left[\psi_c(x,y, \Phi(x, \xi), \Phi(y, \xi))\right]\ \gamma(dx, dy).
  \end{eqnarray*}
  Since the measures $\mu_2, \mu_1\in\mathscr{P}_2(G)$ were arbitrary, as was the 
  optimal coupling $\gamma\in C_*(\mu_1,\mu_2)$, this completes the proof.  
\end{proof}

\subsection{Metric subregularity}
We move now to assumption \eqref{t:msr convergence b} of Theorem \ref{t:msr convergence}.  
In \cite{LukNguTam18} a general quantitative analysis for iterations of 
expansive
fixed point mappings is proposed consisting of two principle components: 
the constituent mappings are pointwise a$\alpha$-fne, and the transport 
discrepancy of the fixed point operator is 
{\em metrically subregular}.  
Recall that, for any mapping $\mymap{\Psi}{A}{B}$,
the inverse mapping $\Psi^{-1}(y)\equiv \set{z\in A}{\Psi(z)=y}$, which 
clearly can be set-valued. 
\begin{definition}[metric subregularity]\label{d:(str)metric (sub)reg}
$~$ Let $(A, d_A)$ and $(B,d_B)$ be metric spaces and let $\mymap{\Psi}{A}{B}$.
The mapping $\Psi$ is called \emph{metrically subregular with respect to the metric $d_B$ 
for $y\in B$ relative 
to $\Lambda\subset A$ on $U\subset A$ with gauge $\rho$} whenever
\begin{equation}\label{e:metricregularity}
\inf_{z\in \Psi^{-1}(y)\cap \Lambda} d_A\paren{x, z}\leq \rho( d_B\paren{y, \Psi(x)})
\quad \forall x\in U\cap \Lambda.
\end{equation}
\end{definition}

Our definition is modeled after \cite{DonRoc14}, where the case where the 
gauge is just a linear function -- $\rho(t)=r t$ for $r>0$ -- is developed.  
In this case, metric subregularity
is one-sided Lipschitz continuity of the (set-valued) inverse mapping $\Psi^{-1}$.  
We will refer to the case when the gauge is linear as {\em linear metric subregularity}.
For connections of this notion to the concept of {\em transversality} in differential 
geometry and its use in variational analysis see \cite{Ioffe17}.  Metric subregularity 
is also related to o-minimal geometry methods and generalizes what are known as 
{\L}ojasiewicz inequalities \cite{BolDanLeyMaz10}.

In the case of 
a finite dimensional linear transition kernel, the constant $\rho$ of metric subregularity 
characterizes the {\em spectral gap}, or the difference between the two  
largest eigenvalues of $\Id-T_i$.  For another example derived from \cite[Propositions 9-10]{LukTebTha18},
the alternating projections operator 
for finding the intersection of two affine subspaces (assumed nonempty) has a transport 
discrepancy mapping $\psi$ whose constant of metric subregularity grows to infinity 
as the (Friedrichs) angle between the sets goes to zero;  in other words, the 
smaller the angle between the subspaces, the slower the rate of convergence of alternating 
projections.

We apply metric regularity to the Markov operator on $\mathscr{P}(G)$ 
with the Wasserstein metric.  In particular, 
the gauge of metric subregularity $\rho$  is constructed 
implicitly in \eqref{eq:gauge} from $\mymap{\theta_{\tau,\epsilon}}{[0,\infty)}{[0,\infty)}$ satisfying 
\eqref{eq:theta}.

Metric subregularity plays a central role in the implicit function paradigm for solution 
mappings (see \cite{CANO2},  \cite{DonRoc14});  it is also notoriously difficult to verify.  
Linear metric subregularity was shown in \cite[Theorem 3.15]{HerLukStu19a} to be {\em necessary} 
 and sufficient for R-linear convergence in expectation of random function 
 iterations for consistent stochastic feasibility.  This result is a stochastic
 analog of \cite[Theorem 2]{LukTebTha18} in the deterministic setting and all of this 
 has been extended to nonlinear spaces in \cite[Theorem 16]{LauLuk21}.   It is 
 an open problem whether metric subregularity is necessary in the present setting, though 
 we see no reason why it should not be.  
 
 We apply this to the Markov operator $\mathcal{P}$ on the metric space 
 $(\mathscr{P}_2(G), W_2)$ in the following manner.  Recall the transport discrepancy $\psi$ 
of a self-mapping $T_i$ defined in \eqref{eq:delta}. 
 We construct the surrogate mapping 
$\mymap{\Psi}{\mathscr{P}(G)}{\mathbb{R}_+}\cup\{+\infty\}$ 
 defined by \eqref{eq:Psi}.
 We call this the {\em invariant Markov transport discrepancy} corresponding to the mappings $T_i$.
 It is not guaranteed  that both $\inv\mathcal{P}$ and $C_*(\mu,\pi)$ are nonempty;  
 when at least one of these is empty, we define $\Psi(\mu)\equiv +\infty$. 
It is clear that $\Psi(\pi)=0$ for any 
$\pi \in \inv\Pcal$.  Whether $\Psi(\mu)=0$ only when $\mu\in \inv\Pcal$ is a property
of the {\em space} $(G, d)$.  Indeed, as noted in the discussion after 
Lemma \ref{lem:psi_f nonneg}, in CAT($\kappa$) spaces with $\kappa>0$
the transport discrepancy $\psi$ can be negative, and so 
by cancellation it could happen on such spaces that the invariant Markov transport discrepancy
$\Psi(\mu)=0$ for $\mu\notin \inv\Pcal$. 
In CAT($0$) spaces, and hence Hadamard spaces, $\psi$ is nonnegative but this is still 
not enough to guarantee that the invariant Markov transport discrepancy $\Psi$ takes the value 
$0$ at $\mu$ if and only if 
$\mu\in\inv\Pcal$.  The regularity we require of $\Pcal$ is that 
the invariant Markov transport discrepancy $\Psi$ takes the value $0$ at $\mu$ if and only if 
$\mu\in\inv\Pcal$, and is metrically subregular for $0$ relative to $\mathscr{P}_2(G)$ 
on $\mathscr{P}_2(G)$ 
defined in \eqref{eq:p-probabiliy measures}.  

\subsection{Contractive Markov operators}
Before moving to our main results, we put the more familiar contractive 
mappings into the present context.  
A survey of random function iterations for contractive mappings 
in expectation can be found in \cite{Stenflo2012}.     An immediate
consequence of \cite[Theorem 1]{Stenflo2012} is the existence of a
unique invariant measure and linear convergence in the Wasserstein
metric from any initial distribution to the invariant measure.  

Contraction Markov operators have been studied in \cite{Oll09} and \cite{JouOll10} using the 
parallel notion of the {\em coarse Ricci curvature} $\kappa(x, y)$ of the Markov 
operator $\mathcal{P}$ between two points 
$x$ and $y$:
\[
\kappa(x, y)\equiv 1 - \frac{W_1(\delta_x\Pcal, \delta_y\Pcal)}{d(x,y)}.
\]
Generalizing this definition to $W_p$ yields the coarse Ricci curvature with respect to $W_p$:
\[
\kappa_p(x, y)\equiv 1 - \frac{W_p^p(\delta_x\Pcal, \delta_y\Pcal)}{d(x,y)^p}.
\]
A few steps lead from this object for the 
Markov operator $\Pcal$ with  update function $\Phi(\cdot, \xi)=T_\xi$ 
and transition kernel defined by \eqref{eq:trans kernel}
to the violation $\epsilon$  in Proposition \ref{thm:Tafne in exp 2 pafne of P}.  
Indeed,  a formal adjustment of the proof of \cite[Proposition 2]{Oll09} establishes that the property
$\kappa_2(x,y)\geq \kappa \in \mathbb{R}$ for all $x, y\in G$ is equivalent to 
\[
W_2(\mu\Pcal, \mu'\Pcal)\leq \sqrt{1-\kappa}\,W_2(\mu, \mu')\quad\forall \mu, \mu'\in \mathscr{P}_2(G).
\]
When $\kappa>0$, i.e. when the coarse Ricci curvature is bounded below by a positive number, this 
characterizes contractivity of the Markov operator.  The negative of the violation in \eqref{eq:paneim}
is just a lower bound on the coarse Ricci curvature in $W_2$: 
$-\epsilon = \kappa\leq \kappa_2(x,y)$ for all $x,y\in G$.  
The consequences of Markov operators with Ricci curvature bounded below by a positive number 
have been extensively investigated.  Error estimates for 
Markov chain Monte Carlo methods under the assumption of positive Ricci curvature in $W_1$ 
(i.e. {\em negative} violation) are explored in \cite{JouOll10}.  Applications
to waiting queues, the Ornstein–Uhlenbeck process on $\Rn$ and Brownian
motion on positively curved manifolds, as well as demonstrations of how to verify the 
assumptions on the Ricci curvature are developed in \cite{Oll09}.  
Our approach extends this to {\em expansive} mappings, which 
allows one to treat our target application 
of electron density reconstructions from X-FEL experiments 
(see Section \ref{sec:incFeas}).  

The next result shows that update functions $\Phi$ that are contractions in expectation 
generate $\alpha$-fne Markov operators with metrically 
subregular invariant Markov transport discrepancy.  In this context see \cite{Klo22}.
The first two parts of the statement are 
the content of \cite[Proposition 4.4]{HerLukStu22a}.
\begin{thm}
\label{thm:contraInExpec}
   Let $(G,\|\cdot\|)$ be a Hilbert space, let 
   $\mymap{T_i}{G}{G}$ for $i\in I$ and let 
  $\mymap{\Phi}{G \times I }{G}$ be given by
  $\Phi(x,i)\equiv T_i(x)$.  
  Denote by  $\mathcal{P}$ the Markov
  operator with  update function $\Phi$ and transition kernel $p$ defined by 
  \eqref{eq:trans kernel}.  
  Suppose that $\Phi$ is a 
  \emph{contraction in expectation} with 
  constant $r<1$, i.e.\  
  $\mathbb{E}[\|\Phi(x,\xi) - \Phi(y,\xi)\|^2] \le r^2 \|x-y\|^2$ for all $x,y \in
  G$. 
  Suppose in addition  that  
  there exists $y \in G$ with
  $\mathbb{E}[\|\Phi(y,\xi) - y\|^2] < \infty$.  
Then the following hold. 
\begin{enumerate}[(i)]
 \item\label{thm:contraInExpec i} There exists a unique
  invariant measure $\pi \in \mathscr{P}_{2}(G)$ for $\mathcal{P}$ and 
  \begin{align*}
    W_2(\mu_0 \mathcal{P}^{n} , \pi) \le r^{n} W_2(\mu_0,\pi)
  \end{align*}
  for all $\mu_0 \in \mathscr{P}_{2}(G)$; that is, the sequence $(\mu_k)$ defined 
  by $\mu_{k+1}=\mu_k\mathcal{P}$ converges  to $\pi$ Q-linearly (geometrically) from any initial 
  measure $\mu_0\in \mathscr{P}_{2}(G)$.
\item\label{thm:contraInExpec ii}   $\Phi$ is $\alpha$-fne 
in expectation with constant $\alpha = (1+r)/2$, and the Markov operator $\mathcal{P}$ 
is $\alpha$-fne on $\mathscr{P}_{2}(G)$; that is,  $\mathcal{P}$
satisfies \eqref{eq:alphfne meas} 
with $\epsilon=0$ and constant $\alpha = (1+r)/2$ on $\mathscr{P}_{2}(G)$.
\item\label{thm:contraInExpec iii} If $\Psi$  defined by \eqref{eq:Psi} satisfies 
\begin{equation}\label{e:Hood}
\exists q>0:\quad \Psi(\mu) \geq qW_2(\mu\Pcal, \mu)\quad\forall \mu\in\mathscr{P}_{2}(G),
\end{equation}
then 
$\Psi$ is linearly metrically subregular for $0$ relative to 
$\mathscr{P}_{2}(G)$ on $\mathscr{P}_{2}(G)$ with gauge $\rho(t) = (q(1-r))^{-1} t$.
\end{enumerate}
\end{thm}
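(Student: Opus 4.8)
The plan is to establish the three claims in order. Claim (ii) is the technical heart; claims (i) and (iii) follow from standard contraction/fixed-point reasoning combined with the lifting result Proposition \ref{thm:Tafne in exp 2 pafne of P} and the coupling machinery already developed. Throughout I would work with the dual operator $\mathcal{P}^{*}$ acting by $\mu\mapsto \mu\mathcal{P}$ on $(\mathscr{P}_2(G),W_2)$.

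For claim (i) I would first check that $\mathcal{P}^{*}$ maps $\mathscr{P}_2(G)$ into itself. Taking $X\sim\mu\in\mathscr{P}_2(G)$ independent of $\xi$ and the reference point $y$ from the moment hypothesis, conditioning on $X$ gives $\mathbb{E}[\|\Phi(X,\xi)-\Phi(y,\xi)\|^2]\le r^2\mathbb{E}[\|X-y\|^2]<\infty$, and combined with $\mathbb{E}[\|\Phi(y,\xi)-y\|^2]<\infty$ and the triangle inequality this shows $\mu\mathcal{P}\in\mathscr{P}_2(G)$. Next I would show $\mathcal{P}^{*}$ is an $r$-contraction in $W_2$: choosing an optimal coupling $(X,Y)$ of $\mu_1,\mu_2$ (nonempty by \cite[Lemma A.7]{HerLukStu22a}) with $\xi$ independent of $(X,Y)$, the pair $(\Phi(X,\xi),\Phi(Y,\xi))$ couples $\mu_1\mathcal{P}$ and $\mu_2\mathcal{P}$, whence $W_2^2(\mu_1\mathcal{P},\mu_2\mathcal{P})\le \mathbb{E}[\|\Phi(X,\xi)-\Phi(Y,\xi)\|^2]\le r^2\mathbb{E}[\|X-Y\|^2]=r^2W_2^2(\mu_1,\mu_2)$. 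Since $G$ is a separable Hilbert space, hence Polish, $(\mathscr{P}_2(G),W_2)$ is complete, so Banach's fixed point theorem yields the unique invariant measure $\pi\in\mathscr{P}_2(G)$ together with the estimate $W_2(\mu_0\mathcal{P}^n,\pi)=W_2(\mu_0\mathcal{P}^n,\pi\mathcal{P}^n)\le r^nW_2(\mu_0,\pi)$.

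For claim (ii) I would use the Hilbert-space representation $\psi(x,x_0,T_\xi x,T_\xi x_0)=\|(x-T_\xi x)-(x_0-T_\xi x_0)\|^2$ from \eqref{eq:nice ineq}. Writing $u:=x-x_0$ and the random vector $v:=T_\xi x-T_\xi x_0$, the target inequality \eqref{eq:paafne i.e.} with $\epsilon=0$, $\alpha=(1+r)/2$ and $\beta:=\tfrac{1-\alpha}{\alpha}=\tfrac{1-r}{1+r}$ reads
\[
\mathbb{E}[\|v\|^2]+\beta\,\mathbb{E}[\|u-v\|^2]\le \|u\|^2.
\]
Expanding the square, the only term depending on the orientation of $v$ is $-2\beta\langle u,\mathbb{E}[v]\rangle$, which I would bound by $\langle u,\mathbb{E}[v]\rangle\ge -\|u\|\,\|\mathbb{E}[v]\|$ together with the Jensen estimate $\|\mathbb{E}[v]\|\le\mathbb{E}[\|v\|]\le(\mathbb{E}[\|v\|^2])^{1/2}\le r\|u\|$, and using $\mathbb{E}[\|v\|^2]\le r^2\|u\|^2$. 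This reduces the claim to the scalar inequality $(1+\beta)r^2+2\beta r\le 1-\beta$, which multiplying through by $(1+r)$ is seen to hold with equality exactly when $\alpha=(1+r)/2$. Hence $\Phi$ is $\alpha$-fne in expectation with $\epsilon=0$ and $\alpha=(1+r)/2$, and Proposition \ref{thm:Tafne in exp 2 pafne of P} lifts this to $\mathcal{P}$ being $\alpha$-fne in measure, i.e.\ \eqref{eq:alphfne meas} with $\epsilon=0$.

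For claim (iii) I would first observe that the hypothesis \eqref{e:Hood} forces $\Psi^{-1}(0)\cap\mathscr{P}_2(G)=\{\pi\}$: if $\Psi(\mu)=0$ then $W_2(\mu\mathcal{P},\mu)=0$, so $\mu=\mu\mathcal{P}\in\inv\mathcal{P}$, and by claim (i) $\mu=\pi$. Thus the left-hand side of \eqref{e:metricregularity} equals $W_2(\mu,\pi)$. The key estimate is $W_2(\mu\mathcal{P},\mu)\ge(1-r)W_2(\mu,\pi)$, which follows from $W_2(\mu,\pi)\le W_2(\mu,\mu\mathcal{P})+W_2(\mu\mathcal{P},\pi)$ and the $r$-contraction $W_2(\mu\mathcal{P},\pi)=W_2(\mu\mathcal{P},\pi\mathcal{P})\le rW_2(\mu,\pi)$. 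Combining with \eqref{e:Hood} gives $\Psi(\mu)\ge qW_2(\mu\mathcal{P},\mu)\ge q(1-r)W_2(\mu,\pi)$, i.e.\ $W_2(\mu,\pi)\le(q(1-r))^{-1}\Psi(\mu)=\rho(\Psi(\mu))$, which is precisely linear metric subregularity for $0$ relative to $\mathscr{P}_2(G)$ with gauge $\rho(t)=(q(1-r))^{-1}t$; the case $\Psi(\mu)=+\infty$ is trivial. I expect the main obstacle to be the cross term in claim (ii): in the deterministic case one uses adversarial anti-alignment $\langle u,v\rangle=-\|u\|\|v\|$, whereas in expectation this must be replaced by the Jensen bound on $\|\mathbb{E}[v]\|$, and it is worth verifying that this is exactly sharp enough to recover the constant $\alpha=(1+r)/2$ with no slack lost.
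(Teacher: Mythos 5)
Your proposal is correct and takes essentially the same route as the paper: part (i) via the optimal-coupling contraction estimate, self-mapping check, and Banach's fixed point theorem; part (ii) via the representation \eqref{eq:nice ineq} with the cross term controlled by Cauchy--Schwarz/Jensen and the contraction bound (your direct verification of the scalar inequality is just an algebraic rearrangement of the paper's bound $\mathbb{E}[\psi]\le(1+r)^2\|x-y\|^2$), then lifting by Proposition \ref{thm:Tafne in exp 2 pafne of P}; and part (iii) via the triangle-inequality estimate $W_2(\mu\mathcal{P},\mu)\ge(1-r)W_2(\mu,\pi)$ combined with \eqref{e:Hood}. No gaps.
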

\begin{proof}
Parts \eqref{thm:contraInExpec i} and \eqref{thm:contraInExpec ii} were proved in 
\cite[Proposition 4.4]{HerLukStu22a}.

The proof of \eqref{thm:contraInExpec iii}  is modeled after the proof of \cite[Theorem 32]{BLL}. 
By the triangle inequality and part \eqref{thm:contraInExpec i} we have 
\begin{eqnarray}
W_2(\mu_{k+1},\mu_k) &\geq& W_2(\mu_k, \pi) - W_2(\mu_{k+1}, \pi) \nonumber\\
&\geq& (1-r)W_2(\mu_k, \pi) \quad  \forall k\in \Nbb.
\label{e:Robin}
\end{eqnarray}
On the other hand, \eqref{e:Hood} implies that $\Psi$ takes the value zero only at invariant 
measures 
so that by    
the uniqueness of invariant measures established in part \eqref{thm:contraInExpec i}
\[
\Psi^{-1}(0)\cap \mathscr{P}_{2}(G) = \inv \Pcal\cap \mathscr{P}_{2}(G) = \{\pi\}.
\]
Combining this with \eqref{e:Robin} and \eqref{e:Hood} then yields for all $k\in \Nbb$
\begin{eqnarray*}
|\Psi(\mu_k)-0| = \Psi(\mu_k)&\geq& qW_2(\mu_{k+1},\mu_k)\nonumber\\
&\geq& q(1-r)W_2(\mu_k, \Psi^{-1}(0)\cap \mathscr{P}_{2}(G)).  
 \end{eqnarray*}
In other words, 
 \begin{equation}\label{e:dumber}
(q(1 - r))^{-1}|\Psi(\mu_k)-0|\geq W_2(\mu_k, \Psi^{-1}(0)\cap \mathscr{P}_{2}(G)) 
\quad  \forall k\in \Nbb.
 \end{equation}
Since this holds for {\em any} sequence$(\mu_k)$ 
 initialized with 
$\mu_0\in \mathscr{P}_{2}(G)$,  
we conclude that $\Psi$ is metrically subregular for $0$ relative to $\mathscr{P}_{2}(G)$ with 
gauge $\rho(t)=(q(1 - r))^{-1}t$ on $\mathscr{P}_{2}(G)$, as claimed.  
\end{proof}

\subsection{Proofs of the main results}
\label{sec:mainres}
We are now in a position to prove the main result.\\

\noindent {\em Proof of Theorem \ref{t:msr convergence}.} 
  First note that by assumption \eqref{thm:contraInExpec i} the Markov operator 
  $\mathcal{P}$ is a
self-mapping on $\mathscr{P}_{2}(G)$, hence
$W_2(\mu,\mu\mathcal{P}) < \infty$, and for any 
$\mu_1, \mu_2\in \mathscr{P}_{2}(G)$ the set of optimal couplings
$C_*(\mu_1, \mu_2)$ is nonempty \cite[Lemma A.7]{HerLukStu22a}.  
Since $(H, d)$ is 
a Hadamard space and $G\subset H$, the function $\Psi(\mu)$ defined by 
\eqref{eq:Psi}
is extended real-valued, nonnegative (see Lemma \ref{lem:psi_f nonneg}), 
and finite since $C_*(\mu, \pi)$ and 
$\inv\mathcal{P}$ are nonempty.
Moreover, by assumption 
\eqref{t:msr convergence b} $\Psi^{-1}(0)=\inv\mathcal{P}$
 and 
\begin{equation}\label{eq:rate step 1}
   \tfrac{1-\alpha}{\alpha}
   \paren{\rho^{-1}\paren{%
   \inf_{\pi\in\inv\mathcal{P}}W_2(\mu, \pi)}}^{2}
   \leq \tfrac{1-\alpha}{\alpha}\Psi^2(\mu).
\end{equation}
On the other hand, note that assumption \eqref{t:msr convergence a} is that 
$\Psi$ is a$\alpha$-fne in expectation, so 
 by definition \eqref{eq:Psi}, assumption 
\eqref{t:msr convergence a} and 
Proposition \ref{thm:Tafne in exp 2 pafne of P} 
(which applies because we are on a separable Hadamard space) we
have 
\begin{eqnarray}
  \tfrac{1-\alpha}{\alpha}\Psi^2(\mu)
   &\leq& 
   \int_{G\times G}\mathbb{E} \left[\psi(x,y, T_\xi x, T_\xi y)\right]\ \gamma(dx, dy)
   \nonumber\\
   &\leq&
     (1+\epsilon) W_2^{2}(\mu,\pi) - W_2^{2}(\mu\mathcal{P},\pi)
      \quad\forall \pi\in\inv\mathcal{P}, 
      \forall \mu\in\mathscr{P}_{2}(G).
      \label{eq:rate step 2}
\end{eqnarray}
Incorporating \eqref{eq:rate step 1} into 
\eqref{eq:rate step 2} and rearranging the inequality yields
\begin{eqnarray*}
W_2^{2}(\mu\mathcal{P},\pi)
   \!&\leq&\! 
      (1+\epsilon) W_2^{2}(\mu,\pi) - 
         \tfrac{1-\alpha}{\alpha}
   \paren{\rho^{-1}\paren{%
   \inf_{\pi'\in\inv\mathcal{P}}W_2(\mu, \pi')}}^{2}
      \quad\forall \pi\in\inv\mathcal{P}, 
      \forall \mu\in\mathscr{P}_{2}(G).
\end{eqnarray*}
Since this holds at {\em any} $\mu\in\mathscr{P}_2(G)$, it certainly 
holds at the iterates $\mu_k$ with initial distribution $\mu_0\in\mathscr{P}_2(G)$
since $\mathcal{P}$ is a 
self-mapping on $\mathscr{P}_2(G)$.   Therefore 
\begin{eqnarray}\label{eq:gauge convergence 0}
&& W_2\paren{\mu_{k+1},\, \pi}
\leq\\ 
&&\qquad \sqrt{(1+\epsilon) W_2^2\paren{\mu_{k},\, \pi} - 
\frac{1-\alpha}{\alpha}
\paren{\rho^{-1}\paren{%
\inf_{\pi'\in\inv\mathcal{P}}W_2\paren{\mu_{k},\, \pi'}
}
}^2
} 
\quad\forall \pi\in\inv\mathcal{P}, ~
\forall k \in \mathbb{N}.\nonumber
\end{eqnarray}

Equation \eqref{eq:gauge convergence 0} simplifies.  
Indeed, by Lemma \ref{lemma:invMeasuresClosed}, $\inv \mathcal{P}$ is closed 
with respect to convergence 
in distribution.  
Moreover, since $G$ is assumed to be compact, 
$\mathscr{P}_2(G)$ is locally compact (\cite[Remark 7.19]{AmbGigSav2005}
so, for every $k\in\Nbb$  the infimum in  \eqref{eq:gauge convergence 0}
is attained  at some $\pi_k$. 
This yields
\begin{equation}
W_2^{2}(\mu_{k+1},\pi_{k+1} ) \leq 
W_2^{2}(\mu_{k+1},\pi_{k} ) \leq 
(1+\epsilon) W_2^{2}(\mu_k, \pi_k)  - 
\tfrac{1-\alpha}{\alpha}\paren{\rho^{-1}
\paren{ W_2(\mu_k, \pi_k)}}^2
\quad\forall k \in \mathbb{N}.
\label{eq:gauge convergence intermed}
\end{equation}
Taking the square root and recalling \eqref{eq:theta} and \eqref{eq:gauge}
yields \eqref{eq:gauge convergence}.

To obtain convergence, note that for $\mu_0\in\mathscr{P}_{2}(G)$
satisfying $W_2(\mu_0,\pi)<\infty$ and  $\mu_0\mathcal{P}\in\mathscr{P}_{2}(G)$
(exists by compactness of $G$
), the triangle inequality and  
\eqref{eq:gauge convergence intermed} yield
\begin{eqnarray*}
 W_2(\mu_{k+1},\mu_k )&\leq&  W_2(\mu_{k+1},\pi_{k} )
 + W_2(\mu_{k},\pi_{k} )\\
 &\leq &\theta_{\tau,\epsilon}\paren{W_2\paren{\mu_k, \pi_k}}+ W_2(\mu_{k},\pi_{k} ).
\end{eqnarray*}
Using \eqref{eq:gauge convergence} 
and continuing by backwards induction yields
\[
 W_2(\mu_{k+1},\mu_k )\leq \theta_{\tau,\epsilon}^{k+1}\paren{ d_0}+ 
 \theta_{\tau,\epsilon}^k\paren{ d_0}
\]
where $d_0\equiv \inf_{\pi\in\inv\mathcal{P}} W_2\paren{\mu_0,\pi}$.
Repeating this argument, for any $k<m$ 
\[
 W_2(\mu_{m},\mu_k )\leq \theta_{\tau,\epsilon}^{m}\paren{ d_0} + 
 2\sum_{j=k+1}^{m-1}\theta_{\tau,\epsilon}^j\paren{ d_0}
 +\theta_{\tau,\epsilon}^k\paren{ d_0}.
\]
By assumption, $\theta_{\tau,\epsilon}$ satisfies \eqref{eq:theta summable}, so for any $\delta>0$
\begin{eqnarray*}
 W_2(\mu_{m},\mu_k )&\leq& 
 \theta_{\tau,\epsilon}^{m}\paren{ d_0} + 
 2\sum_{j=k+1}^{m-1}\theta_{\tau,\epsilon}^j\paren{ d_0}
 +\theta_{\tau,\epsilon}^k\paren{ d_0}\nonumber\\
& \leq& 2\sum_{j=k+1}^{\infty}\theta_{\tau,\epsilon}^j\paren{ d_0}
+ \theta_{\tau,\epsilon}^k\paren{ d_0}
 <\delta
\end{eqnarray*}
for all $k, m$ large enough;  that is the sequence $(\mu_k)$ 
 is a Cauchy sequence
in $(\mathscr{P}_{2}(G), W_2)$ -- a separable complete metric space 
\cite[Theorem 6.9]{Villani2008} -- and therefore convergent to some probability measure 
$\pi^{\mu_0}\in \mathscr{P}_{2}(G)$. The Markov operator $\mathcal{P}$ is Feller since $T_i$ is continuous
and by \cite[Proposition 3.1]{HerLukStu22a} (see also \cite[Theorem 1.10]{Hairer2021}) when a Feller Markov chain converges
in distribution, it does so to an invariant measure:  $\pi^{\mu_0} \in \inv \mathcal{P}$.
\hfill$\Box$

\begin{rem}\label{r:G compact}
The compactness assumption on $G$ can be dropped if 
$(H,d)$ is a Euclidean space.  
\end{rem}

\noindent{\em Proof of Corollary \ref{t:msr convergence - linear}.}
In the case that the gauge $\rho$ is linear with constant $r'$, 
then $\theta_{\tau,\epsilon}(t)$ is linear with constant 
\[
c=\sqrt{1+ \epsilon  - \frac{1-\alpha}{r^2\alpha}}<1,  
\]
where $r\geq r'$ satisfies $r^2\geq (1-\alpha)/(\alpha(1+\epsilon))$.
Specializing the argument in the proof above to this particular $\theta_{\tau,\epsilon}$ 
shows that, for any
$k$ and $m$ with $k<m$, we have
\begin{equation}\label{DR.3}
    \begin{aligned}
W_2\paren{\mu_m,\mu_{k}} &\,\le\, 
d_0 c^m + 2d_0\sum_{j=k+1}^{m-1}c^j+ d_0c^k.\\
    \end{aligned}
\end{equation}
Letting $m\to \infty$ in \eqref{DR.3} yields R-linear convergence
\eqref{eq:R-lin}
with rate $c$ given above and leading constant 
$\beta= \frac{1+c}{1-c}d_0$.

If, in addition, $\inv\mathcal{P}$ is a singleton, then 
$\{\pi^{\mu_0}\} = \inv\mathcal{P}$
in the above and convergence is actually Q-linear, which completes the proof.
\hfill$\Box$

%

\section{Examples: Stochastic Optimization and Inconsistent Nonconvex Feasibility}
\label{sec:incFeas}

To fix our attention we focus on the following optimization problem
  \begin{equation}\label{eq:opt prob}
  \underset{\mu\in \mathscr{P}_2(\Rn)}{\mbox{minimize}}
  \int_{\Rn}\mathbb{E}_{\xi}[f_{\xi^f}(x) + g_{\xi^g}(x)]\mu(dx). 
  \end{equation}
It is assumed throughout that $\mymap{f_{i}}{\mathbb{R}^{n}}{\mathbb{R}}$ is continuously 
  differentiable for all $i \in I_f$ and that $\mymap{g_{i}}{\mathbb{R}^{n}}{\mathbb{R}\cup{+\infty}}$
  is proper (not everywhere infinite), lower semicontinuous ($g_i(\xbar)\leq \liminf_{x\to\xbar} g(x)$ for 
  all $\xbar$) for all $i \in I_g$ and {\em subdifferentially regular}, i.e. the limiting subdifferential and 
  the regular subdifferential are the same, where the regular subdifferential of $g_i$ at $\xbar$, 
  denoted $\partial g_i(\xbar)$, is defined by 
  \[
   \partial g_i(\xbar)\equiv \set{v}{g_i(x)\geq g_i(\xbar)+\ip{v}{x-\xbar}+o(\|x-\xbar\|)}.
  \]
  In particular, when $g_i$ is differentiable at $\xbar$, the subdifferential is a singleton
  consisting of the gradient:  $\partial g_i(\xbar)=\{\nabla g_i(\xbar)\}$.  
For more background on nonsmooth analysis see \cite{VA}.
The random variable with values on $I_f\times I_g$ will be denoted $\xi = (\xi^f, \xi^g)$. 
This model covers deterministic composite optimization as a special case: $I_f$ and $I_g$ consist
of single elements and the measure $\mu$ is a point mass.  

The algorithms reviewed in this section rely on resolvents of the subdifferentials/gradients of the functions
$f_i$ and $g_i$, denoted $\Jcal_{{\partial f_i},\lambda}$ and $\Jcal_{{\partial g_i},\lambda}$.   The resolvent of a 
multi-valued mapping $F$ from elements in 
$G\subset\Rn$ to one or more elements in ${\mathbb{R}^n}$ 
is defined by 
\[\Jcal_{F, \lambda}(x)\equiv \paren{\tfrac{1}{\lambda}\Id+F}^{-1}(x)
\equiv \set{z\in \Rn}{x=\tfrac{1}{\lambda}z+ F(z)}.\]  
For proper, lower semicontinuous convex functions $\mymap{f}{\Rn}{\mathbb{R}\cup\{+\infty\}}$, 
this is equivalent to the proximal mapping \cite{Moreau65} (often just called the {\em prox mapping})
defined by
\begin{equation}\label{eq:prox}
\prox_{f,\lambda}(x)\equiv\argmin_{y}\{f(y)+ \tfrac{1}{2\lambda}d(y,x)^2\}.
\end{equation}
In general one has
\begin{equation}\label{eq:resolvent}
 \prox_{f,\lambda}(x) \subset \Jcal_{\partial f,\lambda}(x)
\end{equation}
whenever the subdifferential is defined.  When $\lambda=1$ in the definitions above, we will just write 
$\prox_f$ or $\Jcal_{\partial f}$.  
\subsection{Stochastic (nonconvex) forward-backward splitting}\label{ex:spg ncvx}
A splitting algorithm applied to problem \eqref{eq:opt prob} is any algorithm that proceeds by
taking steps with respect to $f_{\xi^f}(x)$ and $g_{\xi^g}(x)$ separately and combining these
either through convex combinations or compositions of some sort.  Classical examples of splitting 
methods are the Gauss-Seidel and Jacobi algorithms for solving linear systems. 
We present a general prescription of what is known as the forward-backward splitting algorithm 
together with abstract properties of the corresponding fixed point mapping, and then specialize 
this to more concrete instances. Algorithm \ref{algo:sfb} is called {\em forward-backward} because the 
step in the direction of the negative gradient $-{t}\nabla f_{\xi^f_k}(X_{k})$ is interpreted
as a ``forward'' step (in the context of differential equations, this would be 
an explicit Euler step), while the step taken by applying the resolvent $\Jcal_{\partial g_{\xi^g_k}}$
is interpreted as a ``backward'' step (in the context of differential equations, this would be 
an implicit Euler step).

\begin{algorithm}    
\SetKwInOut{Output}{Initialization}
  \Output{Set $X_{0} \sim \mu_0 \in \mathscr{P}_2(G)$, $X_0 \indep (\xi_k)$ 
   with $\xi_k\equiv(\xi^f_{k}, \xi^g_{k})\sim \xi\equiv(\xi^f, \xi^g)$
  i.i.d.  with values in $I_f\times I_g$, and set ${t}>0$.}
    \For{$k=0,1,2,\ldots$}{
            { 
            \begin{equation}\label{eq:spcd}
                X_{k+1}= T^{FB}_{\xi_k}X_k\equiv \Jcal_{\partial g_{\xi^g_k}}\paren{X_{k}-{t}\nabla f_{\xi^f_k}(X_{k})}
            \end{equation}
            }\\
    }
  \caption{Stochastic Forward-Backward Splitting}\label{algo:sfb}
\end{algorithm}

When $f_{\xi^f}(x) = f(x) + \xi^f\cdot x$ and $g_{\xi^g}$ is the zero function, then this is just 
 steepest descents with linear noise discussed in Section \ref{sec:consist RFI}.  More generally, 
 \eqref{eq:spcd} with $g_{\xi^g}$ the zero function models stochastic gradient 
 descents, which is a central algorithmic template in many applications.  We show how the 
 approach developed above opens the door to an analysis of this basic algorithmic paradigm for 
 {\em nonconvex} problems.  The next statement is the nonconvex analog to 
 \cite[Proposition 4.1]{HerLukStu22a}. The main difference here is that the 
 violation given by \eqref{eq:sfb violation} depends on the step size 
 $t$ in \eqref{eq:spcd}.  We will have more to say about this below.  
 Part (i) of the statement is the nonconvex analog to 
\cite[Proposition 4.1(ii)]{HerLukStu22a};  part (iv) 
 of the statement covers the convex case already established 
 in \cite[Proposition 4.1(iii)]{HerLukStu22a};  this is included for completeness.
 
\begin{prop}\label{t:sfb}
On the Euclidean space $(\Rn, \|\cdot\|)$ suppose the following hold:
\begin{enumerate}[(a)]
 \item for all $i\in I_f$, $\nabla f_i$ is Lipschitz continuous with constant $L$ on $G\subset\Rn$ and
 {\em hypomonotone} on $G$ with violation $\tau_f>0$  on $G\subset\Rn$:
\begin{equation}
 \label{e:hypomonotone} 
    -\tau_f\norm{x-y}^2\leq \ip{\nabla f_i(x)-\nabla f_i(y)}{x-y}
 \qquad \forall x, y\in G.
\end{equation}
\item there is a $\tau_g$ such that for all $i\in I_g$, the (limiting) 
subdifferential $\partial  g_i$ satisfies
\begin{equation}\label{e:submonotone}
 -\tfrac{\tau_g}{2}\norm{(x^++z)-(y^++w)}^{2}\leq \ip{z-w}{x^+-y^+}.
\end{equation}%
at all points $(x^+, z)\in \gph\partial  g_i$ and  $(y^+, w)\in \gph\partial  g_i$
where $z = x-x^+$ for $\{x^+\}= \Jcal_{\partial g_i}(x)$ for any 
$x\in \bigcup_{i\in I_f}\paren{\Id - t\nabla f_i}(G)$ and 
where $w = y-y^+$ for $\{y^+\}= \Jcal_{\partial g_i}(y)$ for any 
$y\in \bigcup_{i\in I_f}\paren{\Id - t\nabla f_i}(G)$.
\item $T^{FB}_i$ is a self-mapping on $G\subset\Rn$ for all $i$. 
\end{enumerate}
 Then  the following hold.
 \begin{enumerate}[(i)]
  \item\label{ex:spg ncvx i} $T^{FB}_i$ is a$\alpha$-fne on $G$ with constant 
  $\alpha = 2/3$ and violation at most 
  \begin{equation}\label{eq:sfb violation}
    \epsilon = \max\{0, (1+2\tau_g)\paren{1+t(2\tau_f+2tL^2)} - 1\}
  \end{equation}
  for all $i\in I$.
  \item\label{ex:spg ncvx ii} $\Phi(x, i):=T_{i}x$ is a$\alpha$-fne in expectation 
  on $G$ with 
  constant $\alpha = 2/3$ and violation at most $\epsilon$ given in \eqref{eq:sfb violation}.
  \item\label{ex:spg ncvx iv} The Markov operator $\mathcal{P}$ corresponding to 
  \eqref{eq:spcd} is a$\alpha$-fne in 
  measure on $\mathscr{P}_2(G)$ with constant $\alpha=2/3$ and violation no 
  greater than $\epsilon$ given in \eqref{eq:sfb violation}, 
  i.e. it  satisfies \eqref{eq:alphfne meas}.
  \item\label{ex:spg ncvx iii} Suppose that 
  assumption (a) holds with condition \eqref{e:hypomonotone} being 
  satisfied for  
  $\tau_f<0$ (that is, $\nabla f_i$ is strongly monotone for all $i$), and that  
  condition \eqref{e:submonotone} holds with $\tau_g=0$ (for instance, when $g_i$ is convex).  
  Then,  whenever there exists an invariant measure 
  for the Markov operator $\mathcal{P}$ corresponding to \eqref{eq:spcd}, 
  for any fixed step length $t\in (0,2\alpha/L]$
  the distributions of the sequences of random variables
  converge to an invariant measure in the Prokhorov-L\'evy metric.
  \item\label{ex:spg ncvx v} Let $G$ be compact and  $\mathscr{P}_2(G)\cap\inv\mathcal{P}\ne \emptyset$.  
  If  $\Psi$ given by \eqref{eq:Psi} 
 takes the value $0$ only at points in $\inv\mathcal{P}$ and 
 is metrically subregular for 
  $0$ on $\mathscr{P}_2(G)$ with gauge $\rho$ given by \eqref{eq:gauge} 
  with $\tau=1/2$, $\epsilon$ satisfying 
  \eqref{eq:sfb violation}, and  $\theta_{\tau,\epsilon}$
  satisfying \eqref{eq:theta} and \eqref{eq:theta summable}  
  where $t_0\equiv d_{W_2}\paren{\mu_0, \inv\mathcal{P}\cap\mathscr{P}_2(G)}<\tbar$ for all $\mu_0\in \mathscr{P}_2(G)$,
 then the Markov chain converges to an invariant distribution in the $W_2$ metric 
 with rate $O(s_k(t_0))$ 
where\\ $s_k(t_0)\equiv\lim_{N\to\infty}\sum_{j=k}^N \theta_{\tau,\epsilon}^{(j)}(t_0)$.  
 \end{enumerate} 
\end{prop}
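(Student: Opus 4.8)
The plan is to establish the single-map regularity (i) first, since parts (ii)--(v) follow from it by the lifting machinery and convergence theorems already in the excerpt. I would decompose the forward--backward operator as $T^{FB}_i=\Jcal_{g_i}\circ F_i$ with \emph{forward} step $F_i\equiv\Id-t\nabla f_i$ and \emph{backward} step the resolvent $\Jcal_{g_i}$, show that each is a$\alpha$-fne with constant $1/2$ on the relevant set using the Hilbert-space characterization \eqref{eq:paafne2}, and then compose. For $F_i$, writing $a\equiv x-y$ and $b\equiv t(\nabla f_i(x)-\nabla f_i(y))$ gives $F_ix-F_iy=a-b$ and $(x-F_ix)-(y-F_iy)=b$; expanding $\|a-b\|^2$, bounding $-2\ip{a}{b}$ via the hypomonotonicity \eqref{e:hypomonotone} and $\|b\|^2$ via the Lipschitz constant $L$, yields that $F_i$ is a$\alpha$-fne with $\alpha=1/2$ and violation $t(2\tau_f+2tL^2)$. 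For $\Jcal_{g_i}$, note that $z\equiv x-x^+\in\partial g_i(x^+)$ for $\{x^+\}=\Jcal_{g_i}(x)$, so \eqref{e:submonotone} is exactly a submonotonicity estimate; with $u\equiv x^+-y^+$ and $v\equiv(x-x^+)-(y-y^+)$ one has $x-y=u+v$, and reading off the lower bound on $\ip{u}{v}$ from \eqref{e:submonotone} shows $\Jcal_{g_i}$ is a$\alpha$-fne with $\alpha=1/2$ and violation controlled by $\tau_g$.

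With both steps a$\alpha$-fne of constant $1/2$, I would invoke the composition rule for a$\alpha$-fne mappings from \cite{LukNguTam18} (see also \cite{LauLuk21}): the composite is a$\alpha$-fne with $\alpha=\tfrac{\alpha_1+\alpha_2-2\alpha_1\alpha_2}{1-\alpha_1\alpha_2}=2/3$ and violation $(1+\epsilon_1)(1+\epsilon_2)-1$, the product of the two violation factors, which reproduces \eqref{eq:sfb violation}; the outer $\max\{0,\cdot\}$ simply records that a genuine nonexpansiveness bound applies whenever the product of factors falls below $1$. Assumption (c) ensures $T^{FB}_i$ is a self-map of $G$, so the estimate is meaningful on $G$, completing (i). I expect the \textbf{main obstacle} to lie in this bookkeeping: confirming single-valuedness and well-definedness of $\Jcal_{g_i}$ on $\bigcup_{i}(\Id-t\nabla f_i)(G)$ under the submonotonicity hypothesis, checking that the domain $F_i(G)$ on which the resolvent must be a$\alpha$-fne is contained in that union, and matching the resolvent's violation precisely to the factor $(1+2\tau_g)$ appearing in \eqref{eq:sfb violation}.

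Parts (ii) and (iii) are then immediate liftings in the Hadamard space $(\Rn,\|\cdot\|)$: Proposition \ref{r:nonneg psi_Phi} carries (i) over to the update function $\Phi(x,i)=T^{FB}_ix$, giving (ii) with the same $\alpha=2/3$ and violation $\epsilon$, and Proposition \ref{thm:Tafne in exp 2 pafne of P} then carries (ii) over to the Markov operator $\mathcal{P}$, giving the in-measure inequality \eqref{eq:alphfne meas} claimed in (iii). For (iv), I would substitute $\tau_g=0$ and $\tau_f<0$ into \eqref{eq:sfb violation}: the factor $(1+2\tau_g)$ equals $1$, so $\epsilon=\max\{0,\,2t(\tau_f+tL^2)\}$, and for $t\in(0,|\tau_f|/L^2]$ one has $\tau_f+tL^2\le\tau_f+|\tau_f|=0$, whence $\epsilon=0$. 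Thus $T^{FB}_i$ is genuinely $\alpha$-fne and $\Phi$ is $\alpha$-fne in expectation, so --- granted the assumed existence of an invariant measure --- convergence of the distributions in the Prokhorov-L\`evy metric follows from the nonexpansive theory of \cite{HerLukStu22a}.

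Finally, (v) is a direct application of Theorem \ref{t:msr convergence}. I would verify its hypotheses in turn: $\Rn$ is a separable Hadamard space and $G$ is compact; each $T^{FB}_i$ is continuous as the composition of the continuous gradient step and the (single-valued, continuous) resolvent; the invariant set is nonempty by $G\cap\inv\mathcal{P}\ne\emptyset$ (an invariant measure carried by the compact set $G$ lies in $\mathscr{P}_2(G)$); hypothesis (b) of that theorem is supplied by part (ii) with $\alpha=2/3$; and hypothesis (c) is precisely the stated metric subregularity of $\Psi$, where the required value $\tau=(1-\alpha)/\alpha=1/2$ coincides with the choice $\tau=1/2$ in the gauge \eqref{eq:gauge}. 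The conclusion \eqref{eq:gauge convergence} then follows verbatim.
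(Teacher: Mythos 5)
Your proposal is correct and follows essentially the same route as the paper: parts (ii)--(v) are obtained exactly as you describe, via Proposition~\ref{r:nonneg psi_Phi}, Proposition~\ref{thm:Tafne in exp 2 pafne of P}, the vanishing of the violation \eqref{eq:sfb violation} for $\tau_f<0$, $\tau_g=0$, $t\le|\tau_f|/L^2$ together with the nonexpansive convergence theory of \cite{HerLukStu22a}, and Theorem~\ref{t:msr convergence}. The only difference is that for part (i) the paper simply cites \cite[Proposition 3.7]{LukNguTam18}, whose proof is precisely the forward--backward decomposition and composition calculus you carry out, so your bookkeeping concerns (single-valuedness of $\Jcal_{g_i}$ on $\bigcup_i(\Id-t\nabla f_i)(G)$ and the exact factor $(1+2\tau_g)$) are exactly the points settled in that reference rather than in this paper.
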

Before proving the statement, some background for conditions \eqref{e:hypomonotone} and \eqref{e:submonotone} 
might be helpful.  The inequality \eqref{e:hypomonotone} is satisfied by functions $f$ that are 
{\em prox-regular} \cite{PolRock96a}.  This traces back to Federer's study of 
curvature measures \cite{Federer59} where such functions would be called functions whose epigraphs have 
{\em positive reach}. Inequality \eqref{e:submonotone} is equivalent to the property 
that $\Jcal_{\partial g_i}$ is a$\alpha$-fne with constant $\alpha_i=1/2$ and 
violation $\tau_g$ on $G$ \cite[Proposition 2.3]{LukNguTam18}.  Any differentiable function $g_i$ with 
gradient satisfying \eqref{e:hypomonotone} with constant $\tau_g/(2(1+\tau_g))$ 
will satisfy \eqref{e:submonotone} with constant $\tau_g$.  
In the present setting, if 
$g_i$ is {\em prox-regular} on $G$, then  
$\partial {g_i}$ is hypomonotone on $G$ and therefore 
satisfies \eqref{e:submonotone} \cite{LukNguTam18}.  Convex functions 
are trivially hypomonotone with constant $\tau=0$.  If the functions $g_i$ are convex, then 
the violation \eqref{eq:sfb violation} can be made arbitrarily small by taking the step size $t$
small enough.
\begin{proof}
\eqref{ex:spg ncvx i}.  This is \cite[Proposition 3.7]{LukNguTam18}. 

\eqref{ex:spg ncvx ii}. This follows immediately from Part \eqref{ex:spg ncvx i} above and 
Proposition \ref{r:nonneg psi_Phi}.

\eqref{ex:spg ncvx iv}.  This follows immediately from Part \eqref{ex:spg ncvx ii} above and 
Proposition \ref{thm:Tafne in exp 2 pafne of P}.

\eqref{ex:spg ncvx iii}.  This is \cite[Proposition 4.1(iii)]{HerLukStu22a}.

\eqref{ex:spg ncvx v}.  This follows from Part \eqref{ex:spg ncvx iv} and Theorem \ref{t:msr convergence}.
\end{proof}
The compactness assumption on 
$G$ in part \eqref{ex:spg ncvx v} is just to permit the application of Theorem \ref{t:msr convergence}.  
As noted in Remark \ref{r:G compact} this assumption can be dropped for mappings $T_i$ on 
Euclidean space. 
The result narrows the work of proving convergence of stochastic forward-backward 
algorithms to verifying existence of $\inv \Pcal$.  The case of convex
stochastic gradient descent was presented in \cite[Proposition 4.2]{HerLukStu22a}. 
  
\subsection{Stochastic Douglas-Rachford}  
Another prevalent algorithm for nonconvex problems is the 
Douglas-Rachford algorithm \cite{LionsMercier79}.  This is
based on compositions of {\em reflected resolvents}:
\begin{equation}\label{eq:Rprox}
 R_{f}\equiv 2\Jcal_{\partial f} -\Id.
\end{equation}

  \begin{algorithm}    
\SetKwInOut{Output}{Initialization}
  \Output{Set $X_{0} \sim \mu_0 \in \mathscr{P}_2(G)$, $X_{0} \indep (\xi_k)$ 
   with $\xi_k\equiv (\xi^f_{k}, \xi^g_{k})\sim \xi\equiv(\xi^f, \xi^g)$ i.i.d.
  with values in $I_f\times I_g$.}
    \For{$k=0,1,2,\ldots$}{
            { 
            \begin{equation}\label{eq:sdr}
                X_{k+1}= T^{DR}_{\xi_k}X_k\equiv 
                \frac{1}{2}\paren{R_{f_{\xi^f_k}}\circ R_{g_{\xi^g_k}} + \Id}(X_{k})
            \end{equation}
            }\\
    }
  \caption{Stochastic Douglas-Rachford Splitting}\label{algo:sdr}
\end{algorithm}
Algorithm \ref{algo:sdr} has been studied for solving large-scale, convex optimization 
and monotone inclusions (see for example \cite{Pesquet19} and  \cite{Cevher2018}). Proposition 
\ref{t:sdr} below 
opens the analysis to nonconvex, nonmonotone problems;
the setting for this statement is given by the following assumptions. 
\begin{assumption}\label{ass:DR}
 On the Euclidean space $(\Rn, \|\cdot\|)$ the following assumptions hold.
\begin{enumerate}[(a)]
 \item There is a $\tau_g$ such that for all $i\in I_g$, the (limiting) 
subdifferential $\partial  g_i$ satisfies
\begin{equation}\label{e:hypomonotone g}
 -\tfrac{\tau_g}{2}\norm{(x^++z)-(y^++w)}^{2}\leq \ip{z-w}{x^+-y^+}
\end{equation}%
at all points $(x^+, z)\in \gph\partial  g_i$ and  $(y^+, w)\in \gph\partial  g_i$
where $z = x-x^+$ for $\{x^+\}= \Jcal_{\partial g_i}(x)$ for any $x\in G\subset \Rn$ and 
where $w = y-y^+$ for $\{y^+\}= \Jcal_{\partial g_i}(y)$ for any $y\in G$.
 \item There is a $\tau_f$ such that for all $i\in I_f$, the (limiting) 
subdifferential $\partial  f_i$ satisfies
\begin{equation}\label{e:hypomonotone f}
 -\tfrac{\tau_f}{2}\norm{(x^++z)-(y^++w)}^{2}\leq \ip{z-w}{x^+-y^+}
\end{equation}%
at all points $(x^+, z)\in \gph\partial  f_{i}$ and  $(y^+, w)\in \gph\partial  f_{i}$
where $z = x-x^+$ for $\{x^+\}= \Jcal_{\partial f_{i}}(x)$ for any $x\in \bigcup_{j\in I_g}\{\Jcal_{\partial g_{j}}(G)\}$ and 
where $w = y-y^+$ for $\{y^+\}= \Jcal_{\partial f_{i}}(y)$ for any $y\in \bigcup_{j\in I_g}\Jcal_{\partial g_{j}}(G)$.
\item $T^{DR}_i$ is a self-mapping on $G\subset\Rn$ for all $i$. 
\end{enumerate}
\end{assumption}

\begin{prop}\label{t:sdr}
Under Assumption \ref{ass:DR} the following hold.
 \begin{enumerate}[(i)]
  \item\label{ex:sdr ncvx i} For all $i\in I_f\times I_g$ the mapping $T^{DR}_i$ 
  defined by \eqref{eq:sdr} 
  is a$\alpha$-fne on $G$ with constant 
  $\alpha = 1/2$ and violation at most 
  \begin{equation}\label{eq:sdr violation}
    \epsilon = \tfrac{1}{2}\paren{(1+ 2\tau_g)(1+2\tau_f) - 1}
  \end{equation}
  on $G$.
  \item\label{ex:sdr ncvx ii} $\Phi(x, i):=T^{DR}_{i}x$ is a$\alpha$-fne in expectation with 
  constant $\alpha = 1/2$ and violation at most $\epsilon$ given by \eqref{eq:sdr violation}.
  \item\label{ex:sdr ncvx iv} The Markov operator $\mathcal{P}$ corresponding to 
  \eqref{eq:sdr} is a$\alpha$-fne in 
  measure with constant $\alpha=1/2$ and violation no greater than $\epsilon$ given by 
  \eqref{eq:sdr violation}, 
  i.e. it  satisfies \eqref{eq:alphfne meas}.
  \item\label{ex:sdr ncvx iii} Suppose that 
  parts (a) and (b) of Assumption \ref{ass:DR} hold with conditions \eqref{e:hypomonotone g} and \eqref{e:hypomonotone f} 
  being satisfied for  
  $\tau_g=\tau_f=0$ (i.e., when $f_i$ and $g_i$ are convex for all $i$).  
  Then,  whenever there exists an invariant measure 
  for the Markov operator $\mathcal{P}$ corresponding to \eqref{eq:sdr}, 
  the distributions of the sequences of random variables
  converge to an invariant measure in the Prokhorov-L\'evy metric.  
  \item\label{ex:sdr ncvx v} Let $G$ be compact and  $\mathscr{P}_2(G)\cap\inv\mathcal{P}\ne \emptyset$.
  If $\Psi$ given by \eqref{eq:Psi} 
 takes the value $0$ only at points in $\inv\mathcal{P}$ and 
 is metrically subregular for 
  $0$ on $\mathscr{P}_2(G)$ with gauge $\rho$ given by \eqref{eq:gauge} 
  with $\tau=1/2$, $\epsilon$ satisfying 
  \eqref{eq:sdr violation}, and  $\theta_{\tau,\epsilon}$
  satisfying \eqref{eq:theta}  and \eqref{eq:theta summable}  
  where $t_0\equiv d_{W_2}\paren{\mu_0, \inv\mathcal{P}\cap\mathscr{P}_2(G)}<\tbar$ for all $\mu_0\in \mathscr{P}_2(G)$,
 then the Markov chain converges in the $W_2$ metric 
 with rate $O(s_k(t_0))$ where $s_k(t_0)\equiv\lim_{N\to\infty}\sum_{j=k}^N \theta_{\tau,\epsilon}^{(j)}(t_0)$.  
 \end{enumerate} 
\end{prop}
\begin{proof}
\eqref{ex:sdr ncvx i}.  By \cite[Proposition 3.7]{LukNguTam18} for all $j\in I_g$, 
$\Jcal_{\partial g_j}$ is a$\alpha$-fne 
with constant $\alpha=1/2$ and violation $\epsilon_g = 2\tau_g$ on $G$.  
Likewise, for all $i\in I_f$,  $\Jcal_{\partial f_i}$  
is a$\alpha$-fne 
with constant $\alpha=1/2$ and violation $\epsilon_f = 2\tau_f$ on 
$\bigcup_{j\in I_g}\{\Jcal_{\partial g_{j}}(G)\}$.
By \cite[Propositions 2.3-2.4]{LukNguTam18}, for all 
$i\in I_f\times I_g$ the Douglas-Rachford mapping $T^{DR}_i$ is therefore 
a$\alpha$-fne with constant $\alpha=1/2$ and 
violation at most $\tfrac{1}{2}\paren{(1+2\tau_g)(1+2\tau_f) - 1}$ 
on $G$.  

\eqref{ex:sdr ncvx ii} - \eqref{ex:sdr ncvx v} follow in the same way as their 
conterparts in Proposition \ref{t:sfb}.
\end{proof}
Here as in Proposition \ref{t:sfb} the compactness assumption on $G$ 
in part \eqref{ex:sdr ncvx v} can be dropped
since $T_i$ is a mapping on $\Rn$ (see Remark \ref{r:G compact}). 

\subsection{Application to X-FEL Imaging}
We apply the Stochastic Forward-Backward Algorithm \ref{algo:sfb} 
and the Stochastic Douglas-Rachford Algorithm \ref{algo:sdr} to 
the problem of X-ray free electron laser imaging discussed in the introduction.  

Here, a high-energy X-ray pulse illuminates molecules suspended in fluid that is 
streaming across the beam.  A 
low-count diffraction image is recorded for each pulse.  
The goal is to reconstruct the three-dimensional electron density of the target molecules 
from the observed diffraction images.  This is a stochastic 
tomography problem with a nonlinear model for the data - stochastic because the molecule 
orientations are random, and uniformly distributed on SO(3).  
Computed tomography with random orientations has been studied for more than two decades 
(see \cite{Bresler2000a} and  \cite{Bresler2000b}) and 
been successfully applied for inverting the Radon transform (a linear operator) with unknown orientations 
\cite{Panaretos09} and  \cite{SingerWu13}.  
The model for the data in X-FEL imaging is nonlinear and nonconvex: Fraunhoffer diffraction 
(mathematically equivalent to Fourier transformation) with missing phase 
\cite{Born} and \cite{ArdGru20}.  The problem of recovering an object from diffraction 
intensity data is the optical phase retrieval problem.  The most successful 
and widely applied methods for solving this problem are fixed point algorithms where the 
fixed point mappings consist of compositions and averages of projection mappings onto nonconvex sets 
\cite{LukSabTeb19};  the connection to the general framework considered here is through the fact that 
the projector is the proximal mapping of the indicator function \cite{VA}, and 
the gradient of the squared distance of a point to a set is twice the difference between that point 
and its projection onto the set \cite[Eq(5.3)]{Luke02a}.  With additional constraints ensuring that 
the reconstructions are confined to a certain region, or are real-valued, 
the feasibility model for the X-FEL problem, and phase retrieval in general, is an example of an 
{\em inconsistent}, nonconvex feasibilty problem:  there
does not exist a point that simultaneously explains the measurement and satisfies the a priori constraints. 
A theoretical framework for unifying and extending the first proofs of local convergence 
of projection methods for inconsistent, nonconvex feasibility, with rates, was established in \cite{LukNguTam18}.  
This analysis accommodates iterations of averages and compositions of expansive mappings.  Moreover, 
unlike many other approaches, 
the framework does not require that the constituent mappings have common fixed points.  
This has been applied to prove, for the first time, local linear convergence of 
a wide variety of fundamental algorithms for phase retrieval (see \cite{LukMar20}, 
 \cite{Thao18},  \cite{HesseLuke13}, and \cite{LukNguTam18}). 

Concretely, the problem is to determine the three-dimensional electron density $\rho$ from a collection of two-dimensional 
images $\{Y_{s_1}, Y_{s_2}, \dots, Y_{s_m}\}$, where each $Y_{s_j}$ is a sample from a 
two-dimensional cross section of the three-dimensional probability distribution $\phi$ at 
orientation $s_j\in SO(3)$, denoted $\phi^{s_j}$\footnote{In this presentation, we ignore 
the additional complication that the rotation $s$ is unobservable and must be 
estimated, one way or another, conditioned on the current estimate for the density.}.  
The set of possible densities $\rho$ satisfying such measurements 
is given by
	\begin{equation}\label{eq:C}
		C\paren{\{Y_{s_1}, Y_{s_2}, \dots, Y_{s_m}\}} \equiv 
		\set{\rho}{\mathbb{P}\paren{\{Y_{s_1}, Y_{s_2}, \dots, Y_{s_m}\}~|~
		\phi}\approx 1}
	\end{equation}
where  
$\mathbb{P}\paren{\{Y_{s_1}, Y_{s_2}, \dots, Y_{s_m}\}~|~{\phi}}$ 
is the probability of making the collection of observations 
$\{Y_{s_1}, Y_{s_2}, \dots, Y_{s_m}\}$ conditioned on $\phi$.  The discretized 
mathematical model for $\phi$ at a voxel $i$ in a given cross section 
$\phi^{s}$ is
	\begin{equation} \label{eq:physical model}
		\left|\paren{\Fcal^s(\rho)}_{i}\right|^2 = \phi^s_{i}.
	\end{equation}
Here $\mymap{\Fcal^s}{\mathbb{C}^{n}}{\mathbb{C}^{n_s}}$ is the cross sectional 
Fourier transform accounting for the propagation of an electromagnetic wave with 
incident angle $s$ onto a two-dimensional surface passing through $n_s$ of the voxels. 
Physically, $\phi^s_{i}$ is the {\em intensity} of the field
at voxel $i$, and accounts for the probability of observing a scattered photon at this voxel 
in the far field under the rotation $s$. 

The set $C\paren{\{Y_{s_1}, Y_{s_2}, \dots, Y_{s_m}\}}$ has been studied extensively in 
\cite{Luke02a},  \cite{BurkeLuke03},  \cite{Luke12},  
\cite{LukNguTam18}, and  \cite{LukMar20} in the case where the observations $\{Y_{s_1}, Y_{s_2}, \dots, Y_{s_m}\}$
together can be taken to be the observed intensity mapping $\phi$, that is 
$C\paren{\{Y_{s_1}, Y_{s_2}, \dots, Y_{s_m}\}}=C(\phi)$. 
Although this set is nonconvex, it is prox-regular \cite{Luke12}. 
Projectors onto $C(\phi)$ are therefore pointwise almost $\alpha$-firmly nonexpansive at 
	any point $\overline{\rho}\in C(\phi)$ with violation $\epsilon$ vanishing to zero as the 
	neighborhood of $\overline{\rho}$ collapses. 

To give an idea of the size of this problem, in a typical experiment $n$ is about $10^8$.  	
In practice, the observation described above for a randomly selected orientation $s_k$ is repeated
about $10^9$ times, each time with a different orientation.  At present it is not feasible to process 
all $10^9$ images $\{Y_{s_1}, Y_{s_2}, \dots, Y_{s_M}\}$ in the construction of the set \eqref{eq:C}.  
Instead, we propose randomly subsampling 
the complete set of images.  

In addition to the sets generated by the data, there are certain a priori qualitative constraints that can 
(and should) be added depending on the type of experiment that has been conducted. Often these are 
support constraints, or real-valuedness, or nonnegativity;  an electron density, for example is by definition 
real-valued and nonnegative.  All of these are convex constraints for which
we reserve the set $C_{0}$ for the qualitative constraints.
	
The problem is a specialization of \eqref{eq:opt prob} where $I_f=\{1\}$, 
    $i^g\in I_g$ indexes all subsets $\mathbb{Y}_{i^g}\subset \{Y_{s_1}, Y_{s_2}, \dots, Y_{s_M}\}$
    with a fixed cardinality $|\mathbb{Y}_{i^g}| = m<M$, 
    $\xi^f_k=1$ for all $k$, $\xi^g_k$ is a 
	uniformly distributed random variable on $I_g$ for all $k$, and 
	\begin{eqnarray} 
	 (\forall k)\quad f_{\xi^f_k}(\rho)&\equiv& \tfrac{\lambda}{2(1-\lambda)}\dist^2(\rho, C_0)\nonumber\\
	 g_{\xi^g_k}(\rho)&\equiv& \iota_{C(\{Y_{s_j}\}_{j\in \mathbb{Y}_{\xi^g_k}})}(\rho)\equiv 
	 \begin{cases} 0&\mbox{ if } \rho\in C(\{Y_{s_j}\}_{j\in \mathbb{Y}_{\xi^g_k}})\\
	                                                     +\infty& \mbox{ otherwise}.
	                                                    \end{cases}
	\end{eqnarray}

	Both Algorithm \ref{algo:sfb} and Algorithm \ref{algo:sdr} can be applied to this problem.  	
	Assumptions (a) and (b) of both Proposition \ref{t:sfb} and Proposition \ref{t:sdr} hold  
	on all neighborhoods of the sets $C_0$ and $C(\{Y_{s_j}\}_{j\in \mathbb{Y}_{\xi^g_k}})$ small enough 
	\cite[Example 3.6]{LukNguTam18}.  
	Indeed, for this application $\tau_f=0$ since $C_0$ is convex and $\tau_{g_{\xi^g_k}}$ in 
	\eqref{e:hypomonotone g} can be estimated from the distance of $\rho$ to the set 
	$C(\{Y_{s_j}\}_{j\in \mathbb{Y}_{\xi^g_k}})$, which 
	is reasonably easy to calculate.  Deterministic versions of Algorithm \ref{algo:sfb} have been fully studied in 
	\cite{LukNguTam18}, though we suspect that the occurrence of local minima could be a significant problem for 
	this algorithm applied to this problem.  In \cite{LukMar20} the fixed points of the 
	deterministic version of Algorithm \ref{algo:sdr} for the  
	phase retrieval problem have been characterized, and metric subregularity of the
	transport discrepancy \eqref{eq:delta} has been determined for geometries applicable 
	to {\em cone and sphere} problems \cite{LukSabTeb19} such as this.  So 
	for a majority of relevant instances, there is good reason to expect that 
	Propositions \ref{t:sfb} and \ref{t:sdr} can be  
	applied provably to X-FEL measurements.  The determination of the local domain $G$ in 
	condition (c) 
	of Proposition \ref{t:sfb} and Assumption \ref{ass:DR} 
	is therefore key.   
	There are some unresolved cases, however, that 
	are relevant for optical phase retrieval (see \cite[Example 5.4]{LukMar20}), 
	and this needs further study.  

\subsection{Stochastic Proximal Algorithms in Tree Space}\label{sec:tree}
We conclude with proximal splitting in locally compact Hadamard spaces. Our target application in this setting
is the computation of Fr\'echet means in what is known as the BHV space, a model space for phylogenetic trees \cite{BilHolVog}.
This is a Hadamard space consisting of stratified Euclidean spaces.  Each strata of the space consists of all trees 
(connected, acyclic,  undirected graphs) with 
the same sets of nodes and edges, but different edge lengths connecting the various nodes.  The leaves of such trees are 
individual species, and the nodes connecting the species are points of common genetic ancestry;  the lengths of the 
edges between nodes represents the time required for the observed genetic variation to occur.  A particular 
tree relationship depends on what part of the genome is being compared;  measurements from two different parts of 
the genome will generally lead to two different phylogenetic trees.  
If two trees belong 
to the same strata, i.e. if the trees differ only in edge lengths, then a geodesic connecting these trees is simply a 
line between two points in a Euclidean space;  otherwise, 
the geodesic passes from one stratum to another by passing through the origin, i.e. by shrinking an edge length to zero
and/or by extending an edge from the origin.  

The {\em distance} between two points $x$ and $y$ in this space, $d(x,y)$ 
is (obviously) the length of the geodesic connecting them, and this can be decomposed into the sum 
of piecewise Euclidean distances of the geodesic confined to individual strata.  The distance function is 
clearly convex.  The task we consider in this example is that of computing the average, 
or more precisely the {\em Fr\'echet mean},
of more than two trees in 
the BHV space:  Given $N$ trees, $x_1, x_2, \dots, x_N$ find $x^*$ that solves
\begin{equation}
 \label{eq:Frechet mean} \inf_{x}\sum_{i=1}^N d(x,x_i)^2.
 \end{equation}
Readers interested in learning more about the application to 
phylogenetic trees are referred to \cite{BilHolVog} and references therein;  our primary interest is in the mathematical 
structure of the problem and demonstrating the theory developed in the previous sections. 

Let $(H,d)$ be a Hadamard space, and let $f_i:H\to \mathbb{R}$ be proper, lower semicontinuous
convex functions for $i=1,2,\dots N$.  The Fr\'echet mean problem is an instance of the following 
convex optimization problem
\begin{equation}
 \label{eq:sum of cvx} \inf_{x\in H}\sum_{i=1}^N f_i(x).
 \end{equation}
 In a Hadamard space the proximal mapping of a function 
 $f$ is defined by \eqref{eq:prox} with the only difference that the domain of $f$ is $H$. 
 Note that the prox mapping is well-defined, even though the resolvent of the subdifferential 
 is not since $H$ is not a linear space.  
 This has been studied in $CAT(0)$ spaces in \cite{Jost97},  \cite{Banert} and  \cite{AriLeuLop14}, 
 and in the Hilbert ball in \cite{KopRei09}.   
 In these earlier works it was already known that 
proximal mappings  of lower semicontinuous convex functions are (everywhere) $\alpha$-firmly nonexpansive with 
$\alpha=1/2$.  In \cite{Bacak14} a randomized proximal splitting algorithm is studied
under the assumption that the proximal parameters $\lambda$ in \eqref{eq:prox} decay to zero, 
in which case the prox mapping converges to the identity mapping.  The theory developed above
allows us to conclude convergence to an invariant probability measure without the assumption 
that the prox mapping converges to the identity.  

Letting $\xi$ be a uniformly distributed random variable with 
values on $\{1, 2, \dots, N\}$ the {\em backward-backward splitting} method applied to this problem yields 
 Algorithm \ref{alg:bbs}.
\begin{algorithm}    
\SetKwInOut{Output}{Initialization}
  \Output{Given $f_1\ldots,f_m$ and $\lambda_i >0$ $(i=1,2,\dots,m)$, 
  set $X_{0} \sim \mu_0 \in \mathscr{P}(G)$, $X_{0} \indep (\xi_{k})$ 
   with  $\xi_k\sim\xi$ i.i.d.} 
    \For{$k=0,1,2,\ldots$}{
            {    \begin{align*}
		X_{k+1}=\prox_{f_{\xi_k}, \lambda_{\xi_k}}(X_k)
    \end{align*}
}\\
    }
  \caption{Proximal splitting}\label{alg:bbs}
\end{algorithm}
In the deterministic 
setting where the prox mappings are selected in a fixed sequential manner, convergence 
has been established already in \cite[Theorem 4.1]{RuiLopNic15}. Local linear convergence
was established in \cite[Theorem 27]{BLL} under the assumption of linear metric subregularity, 
which in this setting reduces to 
 \begin{equation}\label{eq:simple msr}
  d(x,\Fix \overline{T}_m \cap G)\leq r d(\overline{T}_m x, x))\quad \forall x\in G\subset H 
\end{equation}
where 
\[
\overline{T}_m\equiv  
		\paren{\prox_{f_m, \lambda_m}\circ\cdots\circ \prox_{f_2, \lambda_2}\circ \prox_{f_1, \lambda_1}}. 
\]
A random version
of this algorithm with diminishing constants $\lambda_{\xi_k}$ was shown to converge in 
\cite[Theorem 3.7]{Bacak14}.  

The immediate contribution of our work to these earlier studies is algorithmic: by Corollary 
\ref{t:msr convergence - linear} our random implementation 
{\em without diminishing constants} converges linearly to a stationary distribution under the assumption
$\Psi^{-1}(0)=\inv\mathcal{P}$
 and $\Psi(\pi)=0$ for all $\pi\in\inv\mathcal{P}$, and for all
$\mu\in\mathscr{P}_{2}(G)$ 
\begin{eqnarray}
\inf_{\pi\in\inv\mathcal{P}} W_2(\mu, \pi) 
 &\le& rd_{\mathbb{R}}(0,\Psi(\mu))
 = r\Psi(\mu).\nonumber
\end{eqnarray}
In a Euclidean setting this assumption for this problem can be shown to hold
by demonstrating that the discrete set of points $\{x_1, \dots,x_N\}$  is (trivially) 
{\em subtransversal} \cite[Definition 3.2 and Proposition 3.4]{LukNguTam18}.  We conjecture
that this holds in the present setting of randomized algorithms on Hadamard spaces, but 
to show this would take more space than we have here.

This example does not use the full potential of our framework since this problem is convex, and hence 
the proximal mappings are all $\alpha$-firmly nonexpansive with constant $\alpha=1/2$.  
One prominent nonconvex problem in this setting to which our framework can be applied is the 
$K$-means clustering problem:  find the best assignment of a sample of $N$ trees 
$\{x_1, x_2, \dots,x_N\}$ into $K$ clusters.  In this problem one begins with $N$ clusters (the individual trees) and 
reduces the number of clusters by assigning nearest neighbors to a single cluster whose 
center is the Fr\'echet mean.  The 
operation of assigning a tree to a cluster is expansive since two trees arbitrarily close to a point that is
equidistant to more than one cluster will be assigned to distinct clusters with possibly very distant 
Fr\'echet means.  


\end{document}